\documentclass[a4paper,reqno,9pt,oneside]{amsart}
\usepackage{amsmath,geometry}
\usepackage{color}

\geometry{scale=0.8}
\numberwithin{equation}{section}

\newtheorem{thm}{Theorem}[section]

\newtheorem{pro}[thm]{Proposition}
\newtheorem{lm}[thm]{Lemma}
\newtheorem{oss}[thm]{Remark}
\newtheorem{cor}[thm]{Corollary}

\begin{document}

\date{\today}

\title[A classification result for the quasi-linear Liouville equation]{A classification result for the quasi-linear Liouville equation}

\author{Pierpaolo Esposito}
\address{Pierpaolo Esposito, Dipartimento di Matematica e Fisica, Universit\`a degli Studi Roma Tre',\,Largo S. Leonardo Murialdo 1, 00146 Roma, Italy}
\email{esposito@mat.uniroma3.it}

\thanks{Partially supported by Gruppo Nazionale per l'Analisi Matematica, la Probabilit\'a  e le loro Applicazioni (GNAMPA) of the Istituto Nazionale di Alta Matematica (INdAM)}

\begin{abstract} Entire solutions of the $n-$Laplace Liouville equation in $\mathbb{R}^n$ with finite mass are completely classified.
\end{abstract}

\maketitle

\section{Introduction}
\noindent We are concerned with the following Liouville equation
\begin{equation}\label{E1}
\left\{ \begin{array}{ll}
          -\Delta_n U=e^U & \mbox{in } \mathbb{R}^n\\
         \int_{\mathbb{R}^n} e^U <+\infty &        \end{array} \right.
\end{equation}
involving the $n-$Laplace operator $\Delta_n(\cdot) =\hbox{div} (|\nabla (\cdot) |^{n-2}\nabla (\cdot) )$, $n \geq 2$. Here, a solution $U$ of \eqref{E1} stands for a function $U \in C^{1,\alpha}(\mathbb{R}^n)$ which satisfies
\begin{equation} \label{meanE1}
\int_{\mathbb{R}^n} |\nabla U|^{n-2}\langle \nabla U, \nabla \Phi\rangle=\int_{\mathbb{R}^n} e^U \Phi \qquad \forall \ \Phi \in  H=\{ \Phi \in W_0^{1,n}(\Omega): \Omega \subset \mathbb{R}^n \hbox{ bounded}\}.
\end{equation}
As wee will see, the regularity assumption on $U$ is not restrictive since a solution in $W^{1,n}_{\hbox{loc}}(\mathbb{R}^n)$ is automatically in $C^{1,\alpha}(\mathbb{R}^n)$, for some $\alpha \in (0,1)$.

\medskip \noindent Problem \eqref{E1} has the explicit solution
$$U(x)=\log\frac{c_n}{(1+|x|^{\frac{n}{n-1}})^n},\quad x \in \mathbb{R}^{n},$$
where $c_n=n (\frac{n^{2}}{n-1})^{n-1}$. Due to scaling and translation invariance, a $(n+1)-$dimensional family of explicit solutions $U_{\lambda,p}$  to \eqref{E1} is built as
\begin{equation}\label{E16bis}
U_{\lambda,p}(x)=U(\lambda(x-p))+n \log \lambda=\log\frac{c_n \lambda^n}{(1+\lambda^{\frac{n}{n-1}} |x-p|^{\frac{n}{n-1}})^n}
\end{equation}
for all $\lambda>0$ and $p \in \mathbb{R}^n$. Notice that
\begin{equation}\label{E16ter}
\int_{\mathbb{R}^n} e^{U_{\lambda,p}}=\int_{\mathbb{R}^n} e^{U}=c_n \omega_n
\end{equation}
where $\omega_n=|B_1(0)|$.
Our aim is the following classification result:
\begin{thm} \label{thm1}Let $U$ be a solution of \eqref{E1}. Then 
\begin{equation} \label{class}
U(x)=\log\frac{c_n \lambda^n}{(1+\lambda^{\frac{n}{n-1}} |x-p|^{\frac{n}{n-1}})^n},\quad x \in \mathbb{R}^{n}
\end{equation}
for some $\lambda>0$ and $p \in \mathbb{R}^n$.
\end{thm}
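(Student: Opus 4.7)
My strategy is the classical three-step route for Liouville-type classification in bounded-mass regimes: establish sharp asymptotics of $U$ at infinity together with a mass quantization $\int_{\mathbb{R}^n}e^U=c_n\omega_n$; use this decay to trigger a moving-plane argument adapted to the degenerate $n$-Laplace operator, obtaining radial symmetry about some point; and finally identify the radial profile with \eqref{class} by ODE uniqueness and scaling.

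The starting point is asymptotic analysis. Combining the integrability of $e^U$ with the interior $C^{1,\alpha}$ estimates of Tolksdorf--DiBenedetto and a Brezis--Merle-type iteration, I expect to obtain
\begin{equation*}
U(x)=-\gamma\log|x|+O(1),\qquad |\nabla U(x)|=\frac{\gamma}{|x|}+o\Bigl(\frac{1}{|x|}\Bigr)\quad\text{as }|x|\to\infty,
\end{equation*}
for some $\gamma>n$ (the strict inequality being needed for $e^U\in L^1(\mathbb{R}^n)$). Testing \eqref{meanE1} with a cut-off of the constant function and passing to the limit in the divergence-theorem identity $-\int_{\partial B_R}|\nabla U|^{n-2}\partial_\nu U\,d\sigma=\int_{B_R}e^U$ gives the relation $\int_{\mathbb{R}^n}e^U=n\omega_n\gamma^{n-1}$. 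A Pohozaev identity for $-\Delta_n U=e^U$, obtained by multiplying against $x\cdot\nabla U$ and integrating on $B_R$, yields in the limit the second relation $n\int_{\mathbb{R}^n}e^U=(n-1)\omega_n\gamma^n$. Combining the two forces $\gamma=n^2/(n-1)$ and recovers the mass identity \eqref{E16ter}.

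With the sharp decay in hand I would run the moving-plane method in the $n$-Laplace version developed by Damascelli, Sciunzi and coauthors. Setting $U^\lambda(x)=U(2\lambda-x_1,x_2,\dots,x_n)$ and $\Sigma_\lambda=\{x_1<\lambda\}$, the expansion above ensures $U\leq U^\lambda$ in $\Sigma_\lambda$ for $\lambda\ll 0$; a weak comparison principle on slabs, together with a strong comparison principle away from the critical set $\{\nabla U=0\}$ and the fact that the latter is negligible along the direction of reflection, allows one to push the hyperplane up to a critical $\lambda_0$ at which $U\equiv U^{\lambda_0}$. Repeating in every direction gives radial symmetry and monotone decrease of $U$ about some $p\in\mathbb{R}^n$.

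After translating $p$ to the origin, $u(r):=U(x)$ solves
\begin{equation*}
(r^{n-1}|u'|^{n-2}u')'=-r^{n-1}e^u,\quad u'(0)=0,\quad u'<0\text{ on }(0,\infty),
\end{equation*}
while the explicit family \eqref{E16bis} already provides radial solutions indexed by $\lambda>0$. Since the Cauchy problem with data $u(0)=a$, $u'(0)=0$ has a unique regular solution (the equation is non-degenerate on a punctured neighbourhood of $r=0$ once monotonicity is known), and since the scaling in \eqref{E16bis} realises every initial value $u(0)=n\log\lambda+\log c_n$, ODE uniqueness identifies $u$ with a member of the explicit family, proving \eqref{class}. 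The main obstacle is the moving-plane step: the degeneracy of $\Delta_n$ at critical points of $U$ obstructs a direct use of linearization and strong maximum principles, and the first-step asymptotics must be refined enough to supply both the starting configuration at $\lambda\to-\infty$ and the weighted smallness needed on unbounded slabs.
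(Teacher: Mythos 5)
Your proposal follows the classical route (sharp asymptotics $\Rightarrow$ moving planes $\Rightarrow$ radial ODE), which the paper explicitly declines to attempt, and each of the first two steps hides a genuine gap. The pointwise logarithmic expansion $U(x)=-\gamma\log|x|+O(1)$, $|\nabla U|=\gamma/|x|+o(1/|x|)$ does not follow from a Br\'ezis--Merle-type iteration in this quasi-linear setting: there is no Green representation for $-\Delta_n$, and after the Kelvin transform one only knows $e^{\hat U}/|x|^{2n}\in L^1$, the borderline integrability where Serrin's isolated-singularity theory breaks down. The Boccardo--Gallou\"et truncation estimates give nothing stronger than the weighted bound \eqref{0825}, which is strictly weaker than logarithmic decay (see Remark \ref{rem0832}). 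The paper works around this by first proving $\int_{\mathbb{R}^n}e^U\geq c_n\omega_n$ \eqref{1541} through an isoperimetric level-set argument (Theorem \ref{iso0944}) that uses only \eqref{0825}; that lower bound is then what upgrades $e^{\hat U}/|x|^{2n}$ to $L^p$ with $p>1$ (see \eqref{1952}), so that Serrin's $L^\infty$ estimates finally apply and yield the logarithmic behavior in Theorem \ref{aiutobis}. Your argument presupposes the asymptotics at the outset and therefore inverts the logical order.

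The moving-plane step is precisely the obstacle the paper announces it cannot overcome and deliberately bypasses. The comparison-principle technology you cite (Damascelli, Sciunzi, V\'etois, et al.) is built for $n<N$ with a power nonlinearity, positive solutions and polynomial decay; here $n=N$, $U$ is unbounded below, and the nonlinearity is exponential, so those weak comparison results on thin unbounded slabs are not available and you do not supply a substitute. The paper classifies $U$ without ever symmetrizing by reflection: the equality case of the isoperimetric inequality forces each super-level set $\Omega_t$ to be a ball $B_{R(t)}(x(t))$, a Pohozaev identity on $\Omega_t$ yields a first-order ODE for $M(t)=\int_{\Omega_t}e^U$, and explicit integration of that ODE, together with a Lipschitz estimate showing the centers $x(t)$ are constant, gives the formula \eqref{class} directly. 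This makes the classification available even where the moving-plane machinery has no known version.
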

\noindent In a radial setting Theorem \ref{thm1} has been already proved, among other things, in \cite{KaLu}. For the semilinear case $n=2$ such a classification result is known since a long ago. The first proof goes back to J. Liouvillle \cite{Lio} who found a formula-- the so-called Liouville formula-- to represent a solution $U$ on a simply-connected domain in terms of a suitable meromorphic function. On the whole $\mathbb{R}^2$ the finite-mass condition  $\int_{\mathbb{R}^2}e^U<+\infty$ completely determines such meromorphic function.

A PDE proof has been found several years later by W. Chen and C. Li \cite{ChLi}. The fundamental point is to represent a solution $U$ of  \eqref{E1} in an integral form in terms of the fundamental solution and then deduce the precise asymptotic behavior of $U$ at infinity to start the moving plane technique. Such idea has revealed very powerful and has been also applied \cite{ChYa,Lin,Mar,WeXu,Xu1} to the higher-order version of \eqref{E1} involving the operator $(-\Delta)^{\frac{n}{2}}$. Overall, the integral equation satisfied by $U$ can be used to derive asymptotic properties of $U$ at infinity or can be directly studied through the method of moving planes/spheres. Since these methods are very well suited for integral equations, a research line has flourished about qualitative properties of integral equations, see \cite{CLO,FeXu,YYLi,Xu2,Xu3} to quote a few.

\medskip \noindent The quasi-linear case $n>2$ is more difficult. Very recently, the classification of positive $\mathcal{D}^{1,n}(\mathbb{R}^N)-$solutions to $-\Delta_n U=U^{\frac{nN}{N-n}-1}$, a PDE with critical Sobolev polynomial nonlinearity, has been achieved \cite{DMMS,Sci,Vet} for $n<N$ , see also some previous somehow related results \cite{DPR,DaRa,SeZo}.
The strategy is always based on the moving plane method and the analytical difficulty comes from the lack of comparison/maximum principles on thin strips. Moreover for $n<N$ it is not available any Kelvin type transform, a useful tool to ``gain'' decay properties on a solution. 

\medskip \noindent When $n=N$ the classical approach \cite{ChYa,ChLi,Lin,Mar,WeXu,Xu1} breaks down since an integral representation formula for a solution $U$ of \eqref{E1} is not available, due to the quasi-linear nature of $\Delta_n$. It becomes a delicate issue to determine the asymptotic behavior of $U$ at infinity and overall it is not clear how to carry out the method of moving planes/spheres. However, when $n=N$ there are some special features we aim to exploit to devise a new approach which  does not make use of moving planes/spheres, providing in two dimensions an alternative proof of the result in \cite{ChLi}. 
During the completion of this work, we have discovered that such an approach has been already used in \cite{ChKi1} for Liouville systems, where the maximum principle can possibly fail. See also \cite{KePa} for a somewhat related approach to symmetry questions in a ball. 

\medskip \noindent The case $n=N$ is usually referred to as the conformal situation, since $\Delta_n$ is invariant under Kelvin transform: $\hat U(x)=U(\frac{x}{|x|^2})$ formally satisfies
$$\Delta_n \hat U=\frac{1}{|x|^{2n}}(\Delta_n U) (\frac{x}{|x|^2}),$$
so that
$$\left\{ \begin{array}{ll}
          -\Delta_n \hat U=F(x):= \frac{e^{\hat U}}{|x|^{2n}} & \mbox{in } \mathbb{R}^n \setminus \{0\}\\
         \int_{\mathbb{R}^n} \frac{e^{\hat U}}{|x|^{2n}} <+\infty. &        \end{array} \right.$$
Equation has to be interpreted in the weak sense
$$\int_{\mathbb{R}^n} |\nabla \hat U|^{n-2}\langle \nabla \hat U, \nabla \Phi\rangle=\int_{\mathbb{R}^n} \frac{e^{\hat U}}{|x|^{2n} } \Phi \qquad \forall \ \Phi \in \hat H=\{ \Phi: \ \hat \Phi \in H\}.$$

Due to the nonlinearity of $\Delta_n$ we cannot re-absorb the factor $\frac{1}{|x|^{2n}}$ and so \eqref{E1} still does not possess any induced invariance property of Kelvin type. The behavior near an isolated singularity has been thoroughly discussed by J. Serrin \cite{Ser1,Ser2} for very general quasi-linear equations. The case $F \in L^1(\mathbb{R}^n)$ is very delicate as it represents a limiting situation where Serrin's results do not apply. Using some ideas from \cite{AgPe,BBGGPV,BoGa}, in Section 2 we first show that $U$ is bounded from above and satisfies the following weighted Sobolev estimates at infinity:
\begin{equation} \label{0825}
\int_{\mathbb{R}^n \setminus B_1(0) } \frac{|\nabla U|^q}{|x|^{2(n-q)}} <+\infty \qquad \hbox{for all }1\leq q<n.
\end{equation}
According to Remark \ref{rem0832}, estimates \eqref{0825} seem crucial to carry out in Section 3 an isoperimetric argument, which has been originally developed in \cite{ChLi} thanks to the logarithmic behavior of $U$ at infinity, to show that 
\begin{equation} \label{1541}
\int_{\mathbb{R}^n} e^U\geq c_n \omega_n, 
\end{equation}
see also \cite{Li}. Moreover, according to \cite{KaLu}, the Pohozaev identity leads to show that the equality in \eqref{1541} is valid just for solutions $U$ of the form \eqref{class}.

\medskip \noindent Thanks to \eqref{1541}, in Section 4 we can improve the previous estimates and use Serrin's type results, see \cite{Ser1,Ser2}, to show that $U$ has a logarithmic behavior at infinity along with
$$ -\Delta_n U=e^U-\gamma \delta_{\infty} \qquad \hbox{in }\mathbb{R}^n,\quad \gamma=\int_{\mathbb{R}^n} e^U.$$
Going back to an idea of Y.-Y. Li and N. Wolanski for $n=2$, the Pohozaev identity has revealed to be a fundamental tool to derive information on the mass of a singularity when $n=N$ (see for example \cite{BaTa,EsMo,MaPe,RoWe}): applied near $\infty$, it finally gives in Section 5 that $\gamma=\int_{\mathbb{R}^n} e^U=c_n \omega_n$. Notice that in Sections 2 and 4 we reproduce some estimates by emphasizing the dependence of the constants. As we will explain precisely in Remark \ref{R1}, in our argument it is crucial that all the estimates do not really depend on the structural assumption \eqref{a1}.

\medskip \noindent Problems with exponential nonlinearity on a bounded domain can exhibit non-compact solution-sequences, whose shape near a blow-up point is asymptotically described by \eqref{E1}.
A concentration-compactness principle has been established \cite{BrMe} for $n=2$  and \cite{AgPe} for $n \geq 2$. In the non-compact situation the nonlinearity concentrates at the blow-up points as a sum of Dirac measures, whose masses likely belong to $c_n \omega_n \mathbb{N}$ thanks to \eqref{E16ter}. Such a quantization for the concentration masses has been proved \cite{LiSh} for $n=2$ and extended \cite{EsMo} to $n \geq 2$ by requiring an additional boundary assumption. Very refined asymptotic properties have been later established \cite{BCLT,ChLi1,YYLi2}.
The classification result for \eqref{E1} is the starting point in all these issues, which might be now investigated also for $n \geq 2$ thanks to Theorem \ref{thm1}.

%\medskip \noindent The paper is organized as follows. In Section 1 we discuss Theorem \ref{thm1} in the %range $\int_{\mathbb{R}^n}e^U\leq c_n \omega_n$. Section 2 is devoted to some estimates which will be %used in Section $3$  to complete the proof of Theorem \ref{thm1}.

%\medskip\noindent
%{\bf Acknowledgments:}

\section{Some estimates}
\noindent Let $\Omega \subset \mathbb{R}^n$ be a bounded domain and ${\bf a}:\Omega \times \mathbb{R}^n \to \mathbb{R}^n$ be a Carath\'eodory function so that
\begin{eqnarray}
|{\bf a}(x,p)|\leq c(a(x)+|p|^{n-1}) && \forall p \in \mathbb{R}^n, \ a.e.\ x \in \Omega \label{a1}\\
\langle {\bf a}(x,p)-{\bf a}(x,q),p-q \rangle   \geq d |p-q|^n && \forall p,q \in \mathbb{R}^n,\ a.e. \ x \in \Omega \label{a2}
\end{eqnarray}
for some $c,d>0$ and $a \in L^{\frac{n}{n-1}}(\Omega)$. Given $f\in L^1(\Omega)$, let $u \in W^{1,n}(\Omega)$ be a weak solution of
\begin{equation} \label{1420}
-\hbox{div}\ {\bf a}(x,\nabla u)=f \qquad \mbox{in } \Omega.
\end{equation}
Thanks to \eqref{a1} equation \eqref{1420} is interpreted in the following sense:
\begin{equation} \label{1725}
\int_{\Omega} \langle {\bf a}(x,\nabla u), \nabla \phi \rangle=\int_\Omega f \phi \qquad \forall \phi \in W^{1,n}_0(\Omega)\cap L^\infty(\Omega).
\end{equation}
Since $u \in W^{1,n}(\Omega)$ let us consider the weak solution $h\in W^{1,n}(\Omega)$ of
\begin{equation} \label{1419bis}
\left\{ \begin{array}{ll} \hbox{div}\ {\bf a}(x,\nabla h)=0 &\hbox{in }\Omega\\
h=u& \hbox{on }\partial \Omega. \end{array} \right.
\end{equation}
Introduce the truncature operator $T_k$, $k>0$, as
\begin{equation} \label{1426}
T_k(u)=\left\{ \begin{array}{cl}  u &\hbox{if }|u|\leq k\\ k \frac{u}{|u|}  &\hbox{if }|u|>k. \end{array} \right.
\end{equation}
\noindent According to \cite{AgPe,BBGGPV,BoGa} we have the following estimates.
\begin{pro}  \label{T4}
Let $f\in L^{1}(\Omega)$ and assume \eqref{a1}-\eqref{a2}. Let $u$ be a weak solution of  \eqref{1420} in the sense \eqref{1725}, and set 
$$\Lambda_q =(\frac{S_q^{\frac{n}{q}}   d}{\|f\|_1})^{\frac{1}{n-1}}$$ 
where $S_q$ is the Sobolev constant for the embedding $\mathcal D^{1,q}(\mathbb{R}^n) \hookrightarrow L^{\frac{nq}{n-q}}(\mathbb{R}^n)$, $1\leq q<n$. Then,  for every $0<\lambda< \Lambda_1$ there hold
\begin{eqnarray}
\label{E11}
\int_{\Omega}e^{\lambda |u-h|} \leq \frac{ |\Omega|}{ 1-\lambda \Lambda_1^{-1}} ,\qquad \int_\Omega |\nabla (u-h)|^q \leq  \frac{2 S_q }{\Lambda_q ^{\frac{q(n-1)}{n}}} 
\left( 1+\frac{2^{\frac{n}{q(n-1)}}}{(n-1)^{\frac{1}{n-1}}\Lambda_q } \right)^{\frac{q}{n}}|\Omega|^{\frac{n-q}{n}}.
\end{eqnarray}
\end{pro}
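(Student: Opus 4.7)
The plan is to analyze the difference $w = u - h \in W^{1,n}_0(\Omega)$. Subtracting \eqref{1419bis} from \eqref{1725} gives, for every admissible $\phi$,
$$\int_\Omega \langle {\bf a}(x,\nabla u) - {\bf a}(x,\nabla h), \nabla\phi\rangle = \int_\Omega f\phi.$$
The fundamental building block is to test with the truncation $\phi = T_k(w) \in W^{1,n}_0(\Omega) \cap L^\infty(\Omega)$: by the strict monotonicity \eqref{a2}, one obtains the basic energy inequality
$$d \int_{\{|w|\leq k\}} |\nabla w|^n \leq k\,\|f\|_{L^1(\Omega)}.$$

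For the exponential estimate, I would apply the same idea to the \emph{sliced} truncation $\phi = T_\epsilon(w - T_k(w))$, which is supported on $\{|w|>k\}$ with $|\phi|\leq\epsilon$. This localizes the energy to the annulus $\{k<|w|\leq k+\epsilon\}$ with bound $\epsilon\|f\|_{L^1}/d$. Combining the Sobolev embedding $W^{1,1}_0(\Omega)\hookrightarrow L^{n/(n-1)}(\Omega)$ (involving the constant $S_1$) applied to $\phi$, with Hölder's inequality to pass from $\|\nabla\phi\|_{L^n}$ to $\|\nabla\phi\|_{L^1}$, yields the recursive inequality
$$(1+\epsilon\Lambda_1)\,\mu(k+\epsilon) \leq \mu(k),\qquad \mu(k):=|\{|w|>k\}|.$$
Iterating $k/\epsilon$ times and sending $\epsilon\to 0^+$ gives the sharp decay $\mu(k)\leq |\Omega|\,e^{-\Lambda_1 k}$, whence the first bound in \eqref{E11} follows by the layer-cake identity $\int_\Omega e^{\lambda|w|} = |\Omega| + \lambda\int_0^\infty e^{\lambda t}\mu(t)\,dt$, valid for every $\lambda<\Lambda_1$.

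For the $W^{1,q}$ estimate, I would split $\int_\Omega|\nabla w|^q$ at a threshold $k_0$ to be optimized. On $\{|w|\leq k_0\}$, Hölder's inequality with exponents $n/q$ and $n/(n-q)$ combined with the basic energy inequality contributes $(k_0\|f\|_1/d)^{q/n}|\Omega|^{(n-q)/n}$. On $\{|w|>k_0\}$, I would decompose dyadically into annular slices of width $\epsilon$, estimating each through Hölder together with the slice energy bound and the Sobolev embedding $W^{1,q}_0\hookrightarrow L^{nq/(n-q)}$ with constant $S_q$ (this is where $\Lambda_q$ enters), and summing by means of the exponential decay of $\mu$. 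An optimization in $k_0$ (balancing the interior and tail contributions, with $k_0$ of order $\Lambda_q^{-1}$) then produces exactly the displayed constant.

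The main obstacle is achieving the sharp rate $\Lambda_1$ in the exponential estimate. A naive integer iteration ($\epsilon=1$) only yields $\mu(k)\leq|\Omega|(1+\Lambda_1)^{-k}$, hence the inferior rate $\log(1+\Lambda_1)$; only the continuous limit $\epsilon\to 0$, which exploits the precise appearance of $S_1$ in $\Lambda_1=(S_1^n d/\|f\|_1)^{1/(n-1)}$, recovers the correct rate matching $(1-\lambda/\Lambda_1)^{-1}$. Secondary care is then needed to track all Hölder and Sobolev constants in the gradient step, in particular to reproduce the dyadic factor $2^{n/(q(n-1))}/(n-1)^{1/(n-1)}$ appearing in the explicit form of the $W^{1,q}$ bound.
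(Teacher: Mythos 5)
The derivation of the exponential estimate is essentially the paper's: testing with $T_{k+a}(u-h)-T_k(u-h)$ (which is your $T_a(w-T_k(w))$) yields the slice energy bound $\frac{1}{a}\int_{\{k<|w|\le k+a\}}|\nabla w|^n\le\|f\|_1/d$, and the $\mathcal D^{1,1}\hookrightarrow L^{n/(n-1)}$ embedding plus H\"older give the level-set recursion $(1+a\Lambda_1)\Phi(k+a)\le\Phi(k)$ for the distribution function $\Phi$. You close this by iterating and sending $\epsilon\to 0$, whereas the paper first lets $a\to 0$ to obtain $\Phi(k)\le-\Phi'(k)/\Lambda_1$ and then integrates; both routes give the sharp decay $\Phi(k)\le|\Omega|e^{-\Lambda_1 k}$ and hence the first bound in \eqref{E11}.

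For the $W^{1,q}$ bound, however, your plan misidentifies the mechanism. The paper does \emph{not} use the exponential decay of $\Phi$ for the gradient estimate, nor does it need slices of shrinking width; unit-width slices $\Omega_k=\{k-1<|w|\le k\}$ suffice. After estimating $\int_{\Omega_k}|\nabla w|^q\le C_0^{q/n}|\Omega_k|^{(n-q)/n}\le C_0^{q/n}(k-1)^{-q}\big(\int_{\Omega_k}|w|^{nq/(n-q)}\big)^{(n-q)/n}$ via the slice energy bound, H\"older, and $|w|>k-1$ on $\Omega_k$, summing over $k_0<k\le k_0+N$ with the discrete H\"older inequality yields a tail term of the form $C_0^{q/n}\big(\sum_{k>k_0}(k-1)^{-n}\big)^{q/n}\big(\int_\Omega|T_{k_0+N}(w)|^{nq/(n-q)}\big)^{(n-q)/n}$. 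The key step, absent from your plan, is that by the $\mathcal D^{1,q}\hookrightarrow L^{nq/(n-q)}$ embedding the last factor is bounded by $S_q^{-1}\int_\Omega|\nabla T_{k_0+N}(w)|^q$, i.e.\ the unknown reappears on the right-hand side; the choice $k_0=1+\big(\frac{2^{n/q}C_0}{(n-1)S_q^{n/q}}\big)^{1/(n-1)}$ (indeed of order $1+\Lambda_q^{-1}$, as you anticipated) forces $C_0^{q/n}\big(\sum_{k>k_0}(k-1)^{-n}\big)^{q/n}\le S_q/2$, so the tail is \emph{absorbed} into the left and one gets $\int_\Omega|\nabla T_{k_0+N}(w)|^q\le 2(C_0 k_0)^{q/n}|\Omega|^{(n-q)/n}$, whence the second bound in \eqref{E11} by letting $N\to\infty$. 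Summing the tail against $\Phi(k)\le|\Omega|e^{-\Lambda_1 k}$ instead would produce a bound governed by $\Lambda_1$ with exponential, not algebraic, dependence on $k_0$, and cannot account for the appearance of $\Lambda_q$ in the stated constant; it is the Sobolev absorption, not the decay of $\Phi$, that puts $S_q$ and hence $\Lambda_q$ into the bound.
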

\begin{proof} 
Fix $k\geq 0 $, $a> 0$. Since $T_{k+a}(u-h)-T_k(u-h) \in W^{1,n}_0(\Omega)\cap L^\infty (\Omega)$, by \eqref{1725}-\eqref{1419bis} we get that
\begin{equation} \label{1658}
\int_\Omega \langle {\bf a}(x,\nabla u)-{\bf a}(x,\nabla h), \nabla\ [T_{k+a}(u-h)-T_k (u-h)] \rangle= \int_\Omega f [T_{k+a}(u-h)-T_k (u-h)],
\end{equation}
yielding to
\begin{equation} \label{1438}
\frac{1}{a}\int_{\{k< |u-h| \leq k+a\} } |\nabla (u-h)|^n \leq \frac{ \|f\|_1}{d}
\end{equation}
in view of \eqref{a2}.  By \eqref{1438} and the following Lemma we deduce the validity of \eqref{E11} and the proof of Proposition \ref{T4} is complete.
\end{proof}
\begin{lm} \label{lmACP}
Let $w$ be a measurable function with $T_k(w) \in W^{1,n}_0(\Omega)$ so that for all $k\geq 0$, $a > 0$
\begin{equation} \label{1846}
 \frac{1}{a}\int_{\{k< |w| \leq k+a\} } |\nabla w|^n \leq C_0
\end{equation}
for some $C_0>0$. Then there hold
\begin{equation} \label{1847}
\int_{\Omega}e^{\lambda |w|} \leq \frac{ |\Omega|}{ 1-\lambda \Lambda^{-1}}, \qquad \int_\Omega |\nabla w|^q \leq  2 C_0^{\frac{q}{n}}\left( 1+(\frac{2^{\frac{n}{q}} C_0}{(n-1)S_q^{\frac{n}{q}}})^{\frac{1}{n-1}} \right)^{\frac{q}{n}}|\Omega|^{\frac{n-q}{n}}
\end{equation}
for every $0<\lambda<\Lambda=(\frac{S_1^n}{C_0})^{\frac{1}{n-1}}$ and $1\leq q<n$, where $k_0$ is given in \eqref{k0}.
\end{lm}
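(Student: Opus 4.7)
The hypothesis \eqref{1846} encodes a control $dF\le C_0\,dk$ (as measures) on the increasing function $F(k):=\int_{\{|w|\le k\}}|\nabla w|^n$. The plan is to couple this with the Sobolev inequality on truncations of $w$ to produce a Gronwall-type decay for the distribution function $\mu(k):=|\{|w|>k\}|$, and then to extract both assertions in \eqref{1847} from the decay of $\mu$.

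For every $k\ge 0$ and $a>0$, the function $v_{k,a}:=T_{k+a}(w)-T_k(w)$ lies in $W^{1,n}_0(\Omega)$ by hypothesis, is bounded by $a$, equals $\pm a$ on $\{|w|>k+a\}$, and has gradient $\nabla w\cdot\chi_{\{k<|w|\le k+a\}}$. Applying the Sobolev embedding $\mathcal D^{1,1}\hookrightarrow L^{n/(n-1)}$ to $v_{k,a}$, followed by H\"older and \eqref{1846}, yields
\[a\,\mu(k+a)^{(n-1)/n}\le S_1^{-1}(C_0 a)^{1/n}\bigl(\mu(k)-\mu(k+a)\bigr)^{(n-1)/n},\]
which rearranges to $(1+a\Lambda)\mu(k+a)\le\mu(k)$. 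Letting $a\to 0$ (or iterating on a uniform grid) gives $\mu(k)\le|\Omega|\,e^{-\Lambda k}$. The first estimate of \eqref{1847} then follows from the layer-cake identity $\int_\Omega e^{\lambda|w|}=|\Omega|+\lambda\int_0^\infty e^{\lambda t}\mu(t)\,dt$ and a direct integration valid for $\lambda<\Lambda$.

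For the gradient bound the plan is to split at a level $k_0$ to be chosen:
\[\int_\Omega|\nabla w|^q=\int_{\{|w|\le k_0\}}|\nabla w|^q+\int_{\{|w|>k_0\}}|\nabla w|^q.\]
The low part is controlled by H\"older with exponents $n/q$ and $n/(n-q)$, combined with the telescoped hypothesis $\int|\nabla T_{k_0}w|^n\le C_0k_0$, giving $(C_0k_0)^{q/n}|\Omega|^{(n-q)/n}$. For the high part I would rerun the truncation-Sobolev argument above but with exponent $q$ in place of $1$, yielding a second recursion $\mu(k+a)\le\mu(k)/[1+(a/L)^{q(n-1)/(n-q)}]$ with $L=C_0^{1/(n-1)}/S_q^{n/(q(n-1))}$. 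Choosing $a=L$ gives $\mu(k_0+jL)\le 2^{-j}\mu(k_0)$; decomposing $\{|w|>k_0\}$ into annular shells of width $L$ and applying H\"older plus \eqref{1846} in each shell bounds the high part by a geometric sum which is a constant multiple of $(C_0L)^{q/n}|\Omega|^{(n-q)/n}$.

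The main obstacle is the bookkeeping of constants. The threshold $k_0$ defined in \eqref{k0} must be chosen so that the low and high contributions balance, and the geometric-series factor $(1-2^{-(n-q)/n})^{-1}$ on the high side together with the specific value of $L$ has to collapse into precisely the stated $(1+K^{1/(n-1)})^{q/n}$ with $K=\tfrac{2^{n/q}C_0}{(n-1)S_q^{n/q}}$. The natural choice $k_0\sim 1+K^{1/(n-1)}$ should equalize the two contributions to $C_0^{q/n}(1+K^{1/(n-1)})^{q/n}|\Omega|^{(n-q)/n}$ and thus produce the factor $2$ in the second estimate of \eqref{1847}.
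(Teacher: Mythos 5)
Your treatment of the exponential estimate is identical to the paper's: the Sobolev inequality on the truncation difference plus H\"older gives the recursion $(1+a\Lambda)\Phi(k+a)\leq\Phi(k)$, hence $\Phi(k)\leq|\Omega|e^{-\Lambda k}$, and the layer-cake formula finishes. No issues there.

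For the gradient bound you take a genuinely different route. The paper slices $\{|w|>k_0\}$ into unit shells $\Omega_k=\{k-1<|w|\leq k\}$, uses the pointwise lower bound $|w|>k-1$ on $\Omega_k$ to convert $|\Omega_k|^{(n-q)/n}$ into $(k-1)^{-q}\bigl(\int_{\Omega_k}|w|^{nq/(n-q)}\bigr)^{(n-q)/n}$, and then applies discrete H\"older in $k$. The resulting series $\bigl(\sum_{k>k_0}(k-1)^{-n}\bigr)^{q/n}$ is made smaller than $S_q/(2C_0^{q/n})$ by the choice \eqref{k1} of $k_0$, so that the $L^{nq/(n-q)}$-norm of $T_{k_0+N}(w)$ that appears can be \emph{absorbed} back into the left side via the Sobolev inequality applied to $T_{k_0+N}(w)$ itself. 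This absorption is exactly what produces the clean factor $2$ and the specific form of $k_0$ in the statement. Your plan instead re-runs the truncation--Sobolev argument with exponent $q$ to get geometric decay $\mu(k_0+jL)\leq 2^{-j}\mu(k_0)$, splits $\{|w|>k_0\}$ into shells of width $L$, and sums. That is a valid strategy and would prove a qualitative version of the second estimate, but it does not reproduce the stated constant: your geometric sum carries the factor $\bigl(1-2^{-(n-q)/n}\bigr)^{-1}$, which blows up as $q\to n$, whereas the paper's factor $2$ is uniform in $q$. Consequently the ratio (high part)/(low part) in your decomposition is not controlled by $1$, and the asserted balancing ``$k_0\sim 1+K^{1/(n-1)}$ equalizes the two contributions'' fails: the two sides scale the same way in $C_0$ but with different $q$-dependent prefactors, and for $q$ close to $n$ the high part dominates by an unbounded factor. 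In short, the missing step is precisely the Sobolev absorption that the paper uses in place of your geometric-series bound; without it you cannot obtain the constant written in \eqref{1847}, only a weaker bound with a constant that degenerates as $q\to n$. (Since the lemma is only used here to get qualitative bounds uniform in auxiliary parameters, your weaker version would in fact suffice for the applications, but it does not prove the lemma as stated.)
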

\begin{proof}
Let $\Phi(k)= |\{x \in \Omega:|w(x)|>k\}|$ be the distribution function of $|w|$. We have that
\begin{eqnarray*}
\Phi(k+a)^{\frac{n-1}{n}} &\leq& \frac{1}{a} \left(\int_\Omega |T_{k+a}(w)-T_k(w)|^{\frac{n}{n-1}}\right)^{\frac{n-1}{n}}\leq \frac{1}{a S_1} 
\int_\Omega |\nabla T_{k+a}(w)-\nabla T_k(w)|\\
& =& \frac{1}{a S_1}  \int_{\{k< |w| \leq k+a\} } |\nabla w|
\end{eqnarray*}
where $S_1$ is the Sobolev constant of the embedding $\mathcal D^{1,1}(\mathbb{R}^n) \hookrightarrow L^{\frac{n}{n-1}}(\mathbb{R}^n)$. By the H\"older's inequality and \eqref{1846} we deduce that
$$\Phi (k+a) \leq \frac{\Phi(k)-\Phi (k+a)}{a \Lambda}$$
and, as $a \to 0^+$,
\begin{equation} \label{0946}
\Phi (k) \leq - \frac{1}{\Lambda}  \Phi'(k)
\end{equation}
for a.e. $k>0$. Since $\Phi$ is a monotone decreasing function, an integration of \eqref{0946} 
$$\ln \frac{\Phi(k)}{\Phi(0)}\leq \int_0^k \frac{\Phi'}{\Phi} ds \leq-\Lambda k$$
provides that 
$$\Phi(k)\leq |\Omega| e^{- \Lambda k}$$
for all $k>0$, and then 
\begin{eqnarray*} \int_{\Omega}e^{\lambda |w|}
&=& |\Omega|+\lambda \int_\Omega dx \int_0^{|w(x)|} e^{\lambda k} dk= |\Omega|+\lambda \int_0^{\infty} e^{\lambda k}  \Phi (k) dk\\
&\leq&  |\Omega|+\lambda |\Omega|   \int_0^{\infty} e^{-(\Lambda-\lambda) k} dk= \frac{ |\Omega|}{ 1-\lambda \Lambda^{-1}} 
\end{eqnarray*}
for all $0<\lambda<\Lambda$. Given $k_0 \in \mathbb{N}$ introduce the sets 
$$\Omega_{k_0}=\{x \in \Omega:\ |w(x)|\leq k_0\},\qquad \Omega_k=\{x \in \Omega:\ k-1<|w(x)|\leq k\} \ (k>k_0),$$
and by the H\"older's inequality write for $1\leq q <n$
$$\int_{\Omega_{k_0}}|\nabla w|^q \leq (C_0 k_0)^{\frac{q}{n}}|\Omega|^{\frac{n-q}{n}},\quad 
\int_{\Omega_k }|\nabla w|^q \leq C_0 ^{\frac{q}{n}}|\Omega_k|^{\frac{n-q}{n}}\leq \frac{C_0^{\frac{q}{n}}}{(k-1)^q} \left(  \int_{\Omega_k} |w|^{\frac{nq}{n-q}}  \right)^{\frac{n-q}{n}}$$
thanks to \eqref{1846}. For $N\in \mathbb{N}$ let us sum up to get by the H\"older's inequality 
\begin{eqnarray}
\int_\Omega |\nabla T_{k_0+N} (w)|^q&=&\sum_{k=k_0}^{k_0+N} \int_{\Omega_k}|\nabla w|^q \leq
(C_0 k_0)^{\frac{q}{n}} |\Omega|^{\frac{n-q}{n}}+C_0^{\frac{q}{n}} \left(\sum_{k=k_0+1}^{k_0+N}  \frac{1}{(k-1)^n} \right)^{\frac{q}{n}}
\left(\sum_{k=k_0+1}^{k_0+N}  \int_{\Omega_k} |w|^{\frac{nq}{n-q}}  \right)^{\frac{n-q}{n}} \nonumber \\
&\leq&
(C_0 k_0)^{\frac{q}{n}}|\Omega|^{\frac{n-q}{n}}+C_0^{\frac{q}{n}} \left(\sum_{k=k_0+1}^{k_0+N}  \frac{1}{(k-1)^n} \right)^{\frac{q}{n}}
\left(\int_{\Omega} |T_{k_0+N} (w)|^{\frac{nq}{n-q}}  \right)^{\frac{n-q}{n}}.\label{0957}
\end{eqnarray}
Letting
\begin{equation} \label{k1}
k_0 = 1+(\frac{2^{\frac{n}{q}} C_0}{(n-1)S_q^{\frac{n}{q}}})^{\frac{1}{n-1}},
\end{equation}
we have that
\begin{equation} \label{k0}
\sum_{k\geq k_0}  \frac{1}{k^n} \leq \int_{k_0-1}^\infty \frac{dt}{t^n}=\frac{(k_0-1)^{-(n-1)}}{n-1}= \frac{1}{C_0} (\frac{S_q}{2})^{\frac{n}{q}}.
\end{equation}
By using the Sobolev embedding $\mathcal D^{1,q}(\mathbb{R}^n) \hookrightarrow L^{\frac{nq}{n-q}}(\mathbb{R}^n)$ on the L.H.S. of \eqref{0957} and by \eqref{k0} we deduce that
\begin{eqnarray*}
S_q \left(\int_\Omega |T_{k_0+N} (w)|^{\frac{nq}{n-q}} \right)^{\frac{n-q}{n}} \leq
2(C_0 k_0)^{\frac{q}{n}}|\Omega|^{\frac{n-q}{n}},
\end{eqnarray*}
which inserted into \eqref{0957} gives in turn
$$\int_\Omega |\nabla T_{k_0+N} (w)|^q \leq 2(C_0 k_0)^{\frac{q}{n}}|\Omega|^{\frac{n-q}{n}}.$$
Letting $N\to +\infty$ we finally deduce that
$$\int_\Omega |\nabla w|^q \leq 2(C_0 k_0)^{\frac{q}{n}}|\Omega|^{\frac{n-q}{n}}=2 C_0^{\frac{q}{n}}\left( 1+(\frac{2^{\frac{n}{q}} C_0}{(n-1)S_q^{\frac{n}{q}}})^{\frac{1}{n-1}} \right)^{\frac{q}{n}}|\Omega|^{\frac{n-q}{n}}$$
in view of \eqref{k1} and the proof is complete.
\end{proof}

\noindent As a first by-product of Proposition \ref{T4} we have that
\begin{thm} \label{1745} Let $U \in W^{1,n}_{\hbox{loc}}(\mathbb{R}^n)$ be a weak solution of \eqref{E1}. Then $\displaystyle \sup_{\mathbb{R}^n} U<+\infty$ and $U\in C^{1,\alpha}(\mathbb{R}^n)$, $\alpha \in (0,1)$. 
\end{thm}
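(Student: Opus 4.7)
The plan is to combine the Brezis--Merle type inequality of Proposition~\ref{T4} with a Moser iteration. Fix $x_0\in\mathbb{R}^n$ and work on $B_R=B_R(x_0)$. Since $e^U\in L^1(\mathbb{R}^n)$, uniform integrability of the measure $e^U\,dx$ allows one to choose $R_0>0$ so that $\|e^U\|_{L^1(B_R(x_0))}<\epsilon$ for every $x_0\in\mathbb{R}^n$ whenever $R\le R_0$, with $\epsilon>0$ to be fixed small. Applying Proposition~\ref{T4} with $\mathbf{a}(x,p)=|p|^{n-2}p$, which satisfies \eqref{a1}--\eqref{a2} with a dimensional constant $d=d(n)>0$, and $f=e^U$, one has $\Lambda_1\ge(S_1^n d/\epsilon)^{1/(n-1)}$. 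Fixing any $\lambda_0>1$ and taking $\epsilon$ correspondingly small, one obtains uniformly in $x_0$
$$\int_{B_R} e^{\lambda_0|U-h|}\le C,\qquad \int_{B_R} |\nabla(U-h)|^q\le C \quad (1\le q<n),$$
where $h\in U+W^{1,n}_0(B_R)$ is the $n$-harmonic replacement of $U$.

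The most delicate step, and the main obstacle, is to bound $h^+:=\max(h,0)$ from above on $B_{R/2}(x_0)$ uniformly in $x_0$, exploiting that only an upper bound on $U$ is required. Since $h$ is $n$-harmonic, testing its weak equation with $\eta_\delta(h)\psi$ for a smooth cut-off $\eta_\delta$ of $\{s>0\}$ and sending $\delta\to 0$ shows that $h^+$ is a nonnegative $n$-subsolution of the homogeneous equation. Moser iteration then yields
$$\sup_{B_{R/2}(x_0)} h^+\le C(n,R)\,\|h^+\|_{L^1(B_R(x_0))},$$
and the right-hand side is estimated by
$$\|h^+\|_{L^1(B_R)}\le \|U^+\|_{L^1(B_R)}+\|U-h\|_{L^1(B_R)}\le \|e^U\|_{L^1(B_R)}+C\,\|\nabla(U-h)\|_{L^q(B_R)}$$
via $U^+\le e^U$ and Sobolev--Poincar\'e applied to $U-h\in W^{1,q}_0(B_R)$ for some fixed $q<n$. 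Both terms on the right are uniformly controlled by the estimates of the first paragraph, so $\sup_{B_{R/2}(x_0)} h\le C$ uniformly in $x_0$.

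Combining the two previous steps gives the uniform local bound
$$\int_{B_{R/2}(x_0)} e^{\lambda_0 U}\le e^{\lambda_0 C}\int_{B_R(x_0)} e^{\lambda_0(U-h)}\le C',$$
so that $e^U\in L^{\lambda_0}_{\mathrm{loc}}(\mathbb{R}^n)$ with $\lambda_0>1$ and uniform local norm. A further Moser iteration applied to the subsolution inequality $-\Delta_n U^+\le e^{U^+}$ (equivalently, the Serrin--Trudinger $L^\infty$--$L^p$ estimate for quasi-linear equations with right-hand side in $L^p$, $p>1$) then furnishes $\sup_{B_{R/4}(x_0)} U\le C$ uniformly in $x_0$, whence $\sup_{\mathbb{R}^n} U<+\infty$. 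Finally, once $e^U\in L^\infty(\mathbb{R}^n)$, the classical DiBenedetto--Tolksdorf interior regularity theory for the $n$-Laplacian with bounded right-hand side provides $U\in C^{1,\alpha}_{\mathrm{loc}}(\mathbb{R}^n)$ for some $\alpha\in(0,1)$.
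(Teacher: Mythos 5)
Your argument is correct and follows essentially the same strategy as the paper: decompose $U=(U-h)+h$ with an $n$-harmonic replacement $h$, use Proposition~\ref{T4} to get a uniform exponential bound on $|U-h|$, bound $\sup h^+$ by Serrin/Moser, conclude a uniform local $L^p$ bound on $e^U$ for some $p>1$, apply Serrin's $L^\infty$ estimate once more to get $\sup U<+\infty$, and finish with DiBenedetto--Tolksdorf. The only cosmetic differences are that the paper invokes the comparison principle $H\leq U$ together with $U_+^n\leq n!\,e^U$ to control $\|H_+\|_{L^n}$ rather than your $h^+\leq U^++|U-h|$ plus Sobolev--Poincar\'e, and it uses a compactness/covering argument for uniformity in the center instead of absolute continuity of the integral.
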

\begin{proof} Assume that for $0<\epsilon \leq 1$
\begin{equation}\label{1840}
\int_{B_\epsilon(x)} e^U \leq \frac{S_1^n d}{3^{n-1}}.
\end{equation}
Thanks to Proposition \ref{T4} by \eqref{1840} we deduce that
\begin{equation}\label{1539}
\int_{B_\epsilon(x)} e^{2|U-H|}\leq 3 \omega_n,
\end{equation}
where $H$ is a $n-$harmonic function in $B_\epsilon(x)$ with $H=U$ on $\partial B_\epsilon(x)$. Since $H\leq U$ on $B_\epsilon(x)$ by the comparison principle, we have that
\begin{equation} \label{1557}
\int_{B_\epsilon(x)} H_+^n \leq \int_{B_\epsilon(x)} U_+^n  \leq n! \int_{\mathbb{R}^n} e^U
\end{equation}
where $u_+$ denotes the positive part of $u$. Since Theorem $2$ in \cite{Ser1} is easily seen to be valid for $H^+$ too (simply by replacing $|H|$ with $H^+$ in the proof), by \eqref{1557} we have that 
\begin{equation} \label{15400}
\sup_{B_{\frac{\epsilon}{2}}(x)} H_+ \leq C_0(\epsilon)
\end{equation}
for some $C_0(\epsilon)>0$ independent on $x$. By \eqref{1539} and \eqref{15400} we deduce that
\begin{equation} \label{1556}
\int_{B_{\frac{\epsilon}{2}}(x)} e^{2U}=
\int_{B_{\frac{\epsilon}{2}}(x)} e^{2|U-H|} e^{2H}\leq
3 e^{2C_0(\epsilon)} \omega_n.
\end{equation}
Still thanks to the elliptic estimates in \cite{Ser1} on $U^+$, by \eqref{1557} and \eqref{1556} we have that
\begin{equation} \label{1834}
\sup_{B_{\frac{\epsilon}{4}}(x)} U_+ \leq C_1(\epsilon)
\end{equation}
for some $C_1(\epsilon)>0$ independent on $x$. To complete the proof, we argue as follows. Since $\int_{\mathbb{R}^n} e^U<+\infty$ we can find $R>0$ so that 
\begin{equation} \label{1517}
\int_{\mathbb{R}^n \setminus B_R(0)} e^U \leq \frac{S_1^n d}{3^{n-1}}.
\end{equation}
Given $|x|> R+1$, by \eqref{1517} we have the validity of \eqref{1840} with $\epsilon=1$. For all $|x|\leq R+1$ we can find $\epsilon_x>0$ small so that \eqref{1840} holds. By the compactness of the set $\{|x|\leq R+1\}$ we can find points $x_1,\dots, x_L$ so that
\begin{equation} \label{1853}
\{|x|\leq R+1\} \subset  \bigcup_{i=1}^L  B_{\frac{\epsilon_{x_i}}{4}}(x_i).
\end{equation}
Therefore, by \eqref{1834} we deduce that
$$\sup_{\mathbb{R}^n} U \leq \max\{C_1(1),C_1(\epsilon_{x_1}),\dots, C_1(\epsilon_{x_L})  \}<+\infty$$
in view of \eqref{1853}. Since $e^U \in L^\infty (\mathbb{R}^n)$ and $U \in L^n_{\hbox{loc}}(\mathbb{R}^n)$, we can use the elliptic estimates in
\cite{Dib,Ser1,Tol} to show that $U \in C^{1,\alpha}(\mathbb{R}^n)$, for some $\alpha \in (0,1)$.
\end{proof}

\noindent We aim now to establish some bounds on $U$ at infinity. Let us recall that the Kelvin transform $\hat U(x)=U(\frac{x}{|x|^2})$ of $U$ satisfies
\begin{equation}\label{E1equiv}
\left\{\begin{array}{ll}
-\Delta_n \hat U=\frac{e^{\hat U}}{|x|^{2n}}&  \hbox{in }\mathbb{R}^n \setminus \{0\}\\
\int_{\mathbb{R}^n} \frac{e^{\hat U}}{|x|^{2n}} <+\infty,& \end{array}\right.
\end{equation}
where the equation is meant in the weak sense
\begin{equation} \label{meanE1equiv}
\int_{\mathbb{R}^n} |\nabla \hat U|^{n-2} \langle \nabla \hat U,\nabla \Phi \rangle=\int_{\mathbb{R}^n} \frac{e^{\hat U}}{|x|^{2n}}  \Phi \qquad \forall \Phi \in \hat H=\{ \Phi :\  \hat \Phi \in H\}
\end{equation} 
with $H$ given in \eqref{meanE1}. By Theorem \ref{1745} we know that $\hat U \in C^{1,\alpha}(\mathbb{R}^n\setminus \{0\})$. Here and in the sequel, $\alpha \in (0,1)$ will denote an H\"older exponent which can varies from line to line.

\medskip \noindent In order to understand the behavior of $\hat U$ at $0$, we fix $r>0$ small and, for all $0<\epsilon<r$, let $H_\epsilon \in W^{1,n}(A_\epsilon) $ satisfy
\begin{equation} \label{eqHepsilon}
\left\{ \begin{array}{ll} \Delta_n H_\epsilon=0 &\hbox{in }A_\epsilon:=B_r(0) \setminus B_\epsilon(0)\\
H_\epsilon=\hat U&\hbox{on }\partial A_\epsilon. \end{array} \right.
\end{equation}
Regularity issues for quasi-linear PDEs involving $\Delta_n$ are well established since the works of DiBenedetto, Evans, Lewis, Serrin, Tolksdorf, Uhlenbeck, Uraltseva. For example, local H\"older estimates on $H_\epsilon$ can be found in \cite{Ser1} and then it follows by \cite{Dib,Tol} that $H_\epsilon \in C^{1,\alpha}(A_\epsilon)$. Thanks to \cite{Lie} such regularity can be pushed up to the boundary to deduce that $H_\epsilon \in C^{1,\alpha}(\overline{A_\epsilon})$. By \eqref{eqHepsilon} the function $U_\epsilon=\hat U-H_\epsilon \in C^{1,\alpha}(\overline{A_\epsilon})$ satisfies
\begin{equation} \label{eqUepsilon}
\left\{ \begin{array}{ll} \Delta_n (\hat U-U_\epsilon)=0 &\hbox{in }A_\epsilon\\
U_\epsilon=0&\hbox{on }\partial A_\epsilon. \end{array} \right.
\end{equation}
We aim to derive estimates on $H_\epsilon$ and $U_\epsilon$ on the whole $A_\epsilon$ by using Proposition \ref{T4} with
\begin{equation} \label{2022}
{\bf a}(x,p)=|\nabla \hat U(x)|^{n-2}\nabla \hat U(x)-|\nabla \hat U(x)-p|^{n-2}(\nabla \hat U(x)-p).
\end{equation}
\begin{oss} \label{R1}
Let us notice that ${\bf a}(x,p)$ in \eqref{2022} satisfies \eqref{a1} with $a=|\nabla \hat U|^{n-1}$. Since $\hat U$ is expected to be singular at $0$, it is likely true that $\|a\|_{\frac{n}{n-1},A_\epsilon}\to +\infty$ as $\epsilon \to 0$. In order to get uniform estimates in $\epsilon$, it is crucial that the estimates in Propositions \ref{T4} do not depend on $\|a\|_{L^{\frac{n}{n-1}}(\Omega)}$. Assumption \eqref{a1} is just necessary to make meaningful the notion of $W^{1,n}-$weak solution for \eqref{1420}. The same remark is in order for Proposition \ref{Serrinest}, when we will use it in Section 4 to show the logarithmic behavior of $\hat U$ at $0$. 
\end{oss}
\noindent As a second by-product of Proposition \ref{T4} we have that
\begin{thm} \label{aiuto}
There holds
\begin{equation} \label{pro}
\hat U \in W^{1,q}_{\hbox{loc}}(\mathbb{R}^n) 
\end{equation}
for all $1\leq q<n$. \end{thm}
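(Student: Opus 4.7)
\medskip
Since $\hat U\in C^{1,\alpha}(\mathbb{R}^n\setminus\{0\})$ by Theorem \ref{1745}, the only issue is local $W^{1,q}$-integrability of $\nabla\hat U$ near the origin. Fix $r>0$ small. The plan is to estimate $\int_{A_\epsilon}|\nabla\hat U|^q$ uniformly in $\epsilon\in(0,r)$ via the decomposition $\hat U=H_\epsilon+U_\epsilon$ introduced above, and then let $\epsilon\to 0$ by monotone convergence.

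For $U_\epsilon$, I would apply Proposition \ref{T4} with the operator ${\bf a}$ from \eqref{2022}. Since $\nabla\hat U-\nabla U_\epsilon=\nabla H_\epsilon$, one computes ${\bf a}(x,\nabla U_\epsilon)=|\nabla\hat U|^{n-2}\nabla\hat U-|\nabla H_\epsilon|^{n-2}\nabla H_\epsilon$; from \eqref{meanE1equiv} and \eqref{eqHepsilon} it follows that $U_\epsilon$ satisfies $-\hbox{div}\,{\bf a}(x,\nabla U_\epsilon)=e^{\hat U}/|x|^{2n}$ weakly in $A_\epsilon$, with $U_\epsilon=0$ on $\partial A_\epsilon$. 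Since ${\bf a}(x,0)\equiv 0$, the ${\bf a}$-harmonic extension of the boundary value $0$ is identically zero, so in the notation of Proposition \ref{T4} we have $u=U_\epsilon$, $h=0$, $u-h=U_\epsilon$. Using $\|e^{\hat U}/|x|^{2n}\|_{L^1(A_\epsilon)}\leq\int_{\mathbb{R}^n}e^U<+\infty$ together with Remark \ref{R1} (which ensures the constants in Proposition \ref{T4} do not depend on the a priori unbounded quantity $\||\nabla\hat U|^{n-1}\|_{L^{n/(n-1)}(A_\epsilon)}$), one obtains
$$\int_{A_\epsilon}|\nabla U_\epsilon|^q\leq C_q,\qquad \int_{A_\epsilon}e^{\lambda U_\epsilon}\leq C$$
uniformly in $\epsilon$, for every $1\leq q<n$ and $\lambda$ sufficiently small.

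To control $H_\epsilon$, observe that by Theorem \ref{1745} we have $\hat U\leq M:=\sup_{\mathbb{R}^n}U<+\infty$; since $\hat U$ is $n$-superharmonic, the weak comparison principle gives $H_\epsilon\leq\hat U\leq M$ on $A_\epsilon$, so in particular $U_\epsilon\geq 0$. On any compact $K\subset\subset B_r(0)\setminus\{0\}$ (and for $\epsilon$ small enough so that $K\subset A_\epsilon$), the nonnegative $n$-harmonic function $V_\epsilon:=M-H_\epsilon=(M-\hat U)+U_\epsilon$ is uniformly bounded in $L^p(K)$ for every $p<+\infty$: indeed, $M-\hat U\in C^0(K)$ is independent of $\epsilon$ and $U_\epsilon$ is uniformly $L^p(K)$-bounded by the exponential integrability above. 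The local boundedness estimate for nonnegative $n$-harmonic functions then yields a uniform $L^\infty(K)$ bound on $V_\epsilon$, and Tolksdorf's interior $C^{1,\alpha}$-regularity provides a uniform $C^{1,\alpha}(K)$-bound on $H_\epsilon$. Extracting a subsequence $\epsilon_k\downarrow 0$, $H_{\epsilon_k}\to H$ in $C^1_{\rm loc}(B_r\setminus\{0\})$, where $H$ is $n$-harmonic on $B_r\setminus\{0\}$ with $H\leq M$; Serrin's isolated-singularity results \cite{Ser1,Ser2} then give $|\nabla H(x)|\leq C/|x|$ near $0$, hence $H\in W^{1,q}_{\rm loc}(B_r)$ for every $q<n$.

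Extending each $U_\epsilon$ by $0$ to $B_\epsilon$, the sequence $\{U_{\epsilon_k}\}$ is bounded in $W^{1,q}_0(B_r)$, whence a further subsequence converges weakly and a.e.\ to some $U_0\in W^{1,q}_0(B_r)$. Passing to the limit in $\hat U=U_{\epsilon_k}+H_{\epsilon_k}$ on $A_{\epsilon_k}$ gives $\hat U=U_0+H$ a.e.\ on $B_r\setminus\{0\}$, so $\hat U\in W^{1,q}_{\rm loc}(B_r)$; combined with $\hat U\in C^{1,\alpha}(\mathbb{R}^n\setminus\{0\})$, this proves \eqref{pro}. The main obstacle will be the absence of a uniform pointwise lower bound for $H_\epsilon$ on all of $A_\epsilon$---it may plunge to $-\infty$ along $\partial B_\epsilon$---which I bypass by working only on compact subsets of $B_r\setminus\{0\}$, where uniform $L^p$-control plus local boundedness suffice, and by invoking Serrin's singularity theory on the limit function $H$ rather than on the $H_\epsilon$ themselves.
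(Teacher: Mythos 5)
Your proposal takes essentially the same route as the paper: the same $\hat U=H_\epsilon+U_\epsilon$ decomposition, the same application of Proposition \ref{T4} with ${\bf a}$ from \eqref{2022} (and the same observation that ${\bf a}(x,0)=0$ makes the associated $h$ vanish), the same comparison-principle bound $H_\epsilon\leq\hat U$, the same Ascoli--Arzel\`a limit $H_\epsilon\to H_0$, and the same appeal to Serrin's isolated-singularity theory for $H_0$. The only cosmetic difference is that you route the uniform interior $C^{1,\alpha}$ bound on $H_\epsilon$ through a local boundedness (Harnack-type) estimate on the nonnegative $n$-harmonic function $V_\epsilon=M-H_\epsilon$, whereas the paper applies Serrin's $L^\infty$ estimate to $H_\epsilon$ directly from the uniform $L^n$ bound; both are standard and equivalent here.
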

\begin{proof}
Since \eqref{E1equiv} does hold in $A_\epsilon$, \eqref{eqUepsilon} can be re-written as
\begin{equation} \label{eqUepsilonequiv}
\left\{ \begin{array}{ll} \Delta_n (\hat U-U_\epsilon)-\Delta_n \hat U=\frac{e^{\hat U}}{|x|^{2n}} &\hbox{in }A_\epsilon\\
U_\epsilon=0&\hbox{on }\partial A_\epsilon. \end{array} \right.
\end{equation}
Since 
\begin{equation} \label{1045}
d= \inf_{v\not= w} \frac{\langle |v|^{n-2}v-|w|^{n-2}w,v-w \rangle}{|v-w|^n}>0,
\end{equation}
we can apply Proposition \ref{T4} to ${\bf a}(x,p)$ in \eqref{2022}. Since $|A_\epsilon|\leq \omega_n r^n$ and ${\bf a}(x,0)=0$, we deduce that
\begin{equation} \label{17255}
\int_{A_\epsilon} |\nabla U_\epsilon|^q +\int_{A_\epsilon} e^{p U_\epsilon} \leq C
\end{equation}
for all $1\leq q<n$ and all $p\geq 1$ if $r$ is sufficiently small, where $C$ is uniform in $\epsilon$. Notice that
$$\int_{B_r(0)} \frac{e^{\hat U}}{|x|^{2n}}=\int_{\mathbb{R}^n \setminus B_{\frac{1}{r}}(0)} e^U  \to 0$$
as $r \to 0$. By the Sobolev embedding $\mathcal D^{1,\frac{n}{2}}(\mathbb{R}^n) \hookrightarrow L^n(\mathbb{R}^n)$ estimate \eqref{17255} yields that 
\begin{equation} \label{1742}
\int_{A_\epsilon} |U_\epsilon|^n \leq C
\end{equation}
for some $C$ uniform in $\epsilon$. Since $H_\epsilon=\hat U-U_\epsilon$ with $\hat U \in C^{1,\alpha}(\mathbb{R}^n\setminus \{0\})$, by \eqref{1742} we deduce that
$$\|H_\epsilon\|_{L^n(A)} \leq C(A) \qquad  \forall \ A \subset \subset \overline{B_r(0)} \setminus \{0\}$$
for all $\epsilon$ sufficently small. Arguing as before, by \cite{Dib,Lie,Ser1,Tol} it follows that
$$\|H_\epsilon\|_{C^{1,\alpha}(A)} \leq C(A) \qquad \forall \ A \subset \subset \overline{B_r(0)} \setminus \{0\}$$
for $\epsilon$ small. By the Ascoli-Arzel\'a's Theorem and a diagonal process, we can find a sequence $\epsilon \to 0$ so that $H_\epsilon \to H_0$ in $C^1_{\hbox{loc}} (\overline{B_r(0)}\setminus \{0\})$, where $H_0$ satisfies
$$\left\{\begin{array}{ll}
\Delta_n H_0=0 & \hbox{in } B_r(0) \setminus \{0\}\\
H_0=\hat U &\hbox{on }\partial B_r(0). \end{array} \right. $$
Since $H_\epsilon \leq \hat U$ in $A_\epsilon$ by the comparison principle, we have that $U_\epsilon \to U_0:=\hat U-H_0$ in $C^1_{\hbox{loc}} (\overline{B_r(0)}\setminus \{0\})$, where $U_0$ satisfies
$$ U_0\geq 0 \hbox{ in }B_r(0) \setminus \{0\},\qquad \partial_\nu U_0 \leq 0 \hbox{ on }\partial B_r(0).$$
Moreover, by \eqref{17255} we get that 
\begin{equation} \label{pro1}
U_0 \in W^{1,q}_0(B_r(0)) ,\qquad e^{U_0} \in L^p(B_r(0))
\end{equation}
for all $1\leq q<n$ and all $p\geq 1$ if $r$ is sufficiently small.

\medskip \noindent Since $H_0$ is a continuous $n-$harmonic function in $B_r(0) \setminus \{0\}$ with  
$$H_0 \leq \sup_{\mathbb{R}^n \setminus \{0\}} \hat U=\sup_{\mathbb{R}^n} U<\infty$$
in view of Theorem \ref{1745}, we can apply the result in \cite{Ser1} about isolated singularities: either $H_0$ has a removable singularity at $0$ or 
$$\frac{1}{C} \leq \frac{H_0(x)}{\ln |x|}\leq C$$
near $0$ for some $C>1$. According to \cite{Ser2}, in both situations we have that 
\begin{equation} \label{pro2}
H_0 \in W^{1,q}(B_r(0)) 
\end{equation}
for all $1\leq q<n$. The combination of \eqref{pro1} and \eqref{pro2} establishes the validity of \eqref{pro} for $\hat U=U_0+H_0$. 
\end{proof}

\noindent In terms of $U$, Theorem \ref{aiuto} simply gives that
\begin{cor} \label{aiutocor}
There holds
\begin{equation} \label{1712}
\int_{\mathbb{R}^n \setminus B_1(0)} \frac{|\nabla U|^q}{|x|^{2(n-q)}} <+\infty
\end{equation}
for all $1\leq q <n$.
\end{cor}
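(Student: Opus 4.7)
The statement is a direct translation of Theorem \ref{aiuto} from $\hat U$ back to $U$ via the Kelvin inversion $y=x/|x|^2$, so the plan is simply to push the local $W^{1,q}$ bound on $\hat U$ near the origin over to the weighted estimate on $\nabla U$ near infinity by a change of variables. Theorem \ref{aiuto} supplies $\int_{B_r(0)}|\nabla \hat U|^q<+\infty$ for every $1\leq q<n$ (for any fixed $r>0$, not just small $r$, since $\hat U\in C^{1,\alpha}(\mathbb{R}^n\setminus\{0\})$ handles the annulus $\{\epsilon\leq |x|\leq r\}$ for any $\epsilon>0$). Taking $r=1$ and combining with the $C^{1,\alpha}$ bound away from $0$, it suffices to rewrite $\int_{B_1(0)}|\nabla \hat U|^q\,dx$ in the $y$ variable.

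The core computation is the transformation rule for the gradient under inversion. With $y=x/|x|^2$, the Jacobian matrix
$$\frac{\partial y_j}{\partial x_i}=\frac{1}{|x|^2}\Bigl(\delta_{ij}-\frac{2x_ix_j}{|x|^2}\Bigr)$$
equals $|x|^{-2}$ times a Householder reflection, hence is conformal with $|\partial y/\partial x|_{\mathrm{op}}=|x|^{-2}$. The chain rule therefore gives
$$|\nabla_x \hat U(x)|=\frac{1}{|x|^2}|\nabla_y U(y)|,\qquad y=\frac{x}{|x|^2},$$
and the volume form transforms as $dx=|y|^{-2n}\,dy$ (and $|x|=1/|y|$).

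Substituting these into the local integral on $\hat U$ yields
$$\int_{B_1(0)\setminus\{0\}}|\nabla \hat U(x)|^q\,dx=\int_{\mathbb{R}^n\setminus B_1(0)}\frac{|\nabla U(y)|^q}{|y|^{2q}}\cdot\frac{dy}{|y|^{2n-4q-\cdots}}=\int_{\mathbb{R}^n\setminus B_1(0)}\frac{|\nabla U(y)|^q}{|y|^{2(n-q)}}\,dy,$$
where the exponent bookkeeping is $2q+2n-2q-2q=2(n-q)$ in the denominator of $|y|$. By Theorem \ref{aiuto} the left-hand side is finite for every $1\leq q<n$, so \eqref{1712} follows.

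The only thing to double-check is that $\hat U$ is not just $W^{1,q}$ on $B_r(0)$ for small $r$ but on $B_1(0)$; but Theorem \ref{aiuto} is a statement on the target space $\mathbb{R}^n$ (with no smallness on $r$), and away from the origin $\hat U\in C^{1,\alpha}$ by Theorem \ref{1745}, so the finiteness extends from a small ball around $0$ to the whole $B_1(0)$ trivially. No further obstacle arises; the corollary is purely a reformulation via Kelvin inversion.
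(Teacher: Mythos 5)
Your proposal is correct and follows the same route as the paper: a change of variables under the Kelvin inversion $y=x/|x|^2$ applied to the local $W^{1,q}$ bound on $\hat U$ from Theorem \ref{aiuto}. One small caveat: the displayed intermediate step garbles the exponent bookkeeping — since $|x|=1/|y|$, the factor $|x|^{-2q}$ from the gradient rule equals $|y|^{2q}$ and belongs in the \emph{numerator}, so the computation should simply read $|\nabla_x\hat U(x)|^q\,dx=|y|^{2q}|\nabla_y U(y)|^q\cdot|y|^{-2n}\,dy=|\nabla U(y)|^q|y|^{-2(n-q)}\,dy$; the final identity you state is nevertheless correct.
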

\begin{proof}
Since 
$$\Big| \hbox{det }D \frac{x}{|x|^2} \Big|=\frac{1}{|x|^{2n}}$$
and
$$|\nabla \hat U|(x)=\frac{1}{|x|^2}|\nabla U|(\frac{x}{|x|^2}),$$
we have that
$$\int_{B_r(0)}|\nabla \hat U|^q=\int_{\mathbb{R}^n \setminus B_{\frac{1}{r}}(0)}
\frac{|\nabla U|^q}{|x|^{2(n-q)}}.$$
By Theorems \ref{1745} and \ref{aiuto} we then deduce that
$$\int_{\mathbb{R}^n \setminus B_1(0)} \frac{|\nabla U|^q}{|x|^{2(n-q)}} <+\infty$$
for all $1\leq q <n$, as desired.
\end{proof}

\section{An isoperimetric argument}
\medskip \noindent The aim is to classify all the solutions $U$ of \eqref{E1} with small ``mass''. The following isoperimetric approach leads to:
\begin{thm} \label{iso0944}
Let $U$ be a solution of \eqref{E1} with $\int_{\mathbb{R}^n} e^U\leq c_n \omega_n$. Then $U$ is given by \eqref{E16bis}. 
\end{thm}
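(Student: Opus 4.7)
The plan is to carry out an isoperimetric argument on the super-level sets of $U$: this produces the sharp lower bound $M:=\int_{\mathbb{R}^n}e^U\geq c_n\omega_n$, and then rigidity of the isoperimetric inequality combined with the Pohozaev-based classification of radial solutions in \cite{KaLu} forces $U$ into the explicit family \eqref{E16bis}.

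Set $\mu(t)=|\{U>t\}|$ and $m(t)=\int_{\{U>t\}}e^U\,dx$; these are well defined and non-increasing thanks to $\sup_{\mathbb{R}^n}U<+\infty$ (Theorem \ref{1745}) and $e^U\in L^1$, with $m(-\infty)=M$ and $m(+\infty)=0$. The heart of the argument rests on the integral identity
\begin{equation}\label{plan-key}
m(t)=\int_{\{U=t\}}|\nabla U|^{n-1}\,d\mathcal{H}^{n-1}\qquad\text{for a.e. }t,
\end{equation}
which I would obtain by inserting $\phi_{\epsilon,R}=\frac{1}{\epsilon}T_\epsilon((U-t)_+)\,\eta_R\in H$ into the weak formulation \eqref{meanE1}, where $\eta_R$ is a smooth cutoff of $B_R$. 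Sending $\epsilon\to0$ first identifies the bulk term as $\int_{\{U=t\}}\eta_R|\nabla U|^{n-1}\,dS$ via coarea and the right-hand side as $\int_{\{U>t\}}e^U\eta_R\,dx$; then sending $R\to\infty$ along a suitable subsequence, the cutoff remainder $\int|\nabla U|^{n-2}\langle\nabla U,\nabla\eta_R\rangle\,dx$ must vanish, which is precisely where the weighted estimates of Corollary \ref{aiutocor} are used: the pull-back $\hat U\in W^{1,q}_{\hbox{loc}}(\mathbb{R}^n)$ for every $q<n$ delivers, via H\"older on annuli, a diagonal sequence $R_k\to\infty$ along which the annular flux of $|\nabla U|^{n-1}$ becomes negligible after the $1/R_k$ factor from $|\nabla\eta_{R_k}|$.

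Given \eqref{plan-key} together with the coarea identity $-\mu'(t)=\int_{\{U=t\}}|\nabla U|^{-1}\,dS$, H\"older's inequality with exponents $n$ and $n/(n-1)$ bounds the perimeter
\[
P(\{U>t\})=\int_{\{U=t\}}dS\leq m(t)^{1/n}(-\mu'(t))^{(n-1)/n},
\]
while the Euclidean isoperimetric inequality $P(\{U>t\})\geq n\omega_n^{1/n}\mu(t)^{(n-1)/n}$ rearranges these to
\[
-\mu'(t)\geq n^{\frac{n}{n-1}}\omega_n^{\frac{1}{n-1}}\,\frac{\mu(t)}{m(t)^{\frac{1}{n-1}}}.
\]
Multiplying by $e^t m(t)^{\frac{1}{n-1}}$, using $-m'(t)=e^t(-\mu'(t))$, and integrating over $t\in\mathbb{R}$ (with $\int_{\mathbb{R}}e^t\mu(t)\,dt=M$ by Fubini), one obtains
\[
\frac{n-1}{n}\,M^{\frac{n}{n-1}}\geq n^{\frac{n}{n-1}}\omega_n^{\frac{1}{n-1}}\,M\quad\Longrightarrow\quad M\geq n\Bigl(\tfrac{n^2}{n-1}\Bigr)^{n-1}\omega_n=c_n\omega_n.
\]
Under the standing hypothesis $M\leq c_n\omega_n$, equality must hold at every step for a.e. $t$: isoperimetric rigidity forces each $\{U>t\}$ to be a ball, and H\"older equality forces $|\nabla U|$ to be constant on every $\{U=t\}$. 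Matching these across nearby levels (a nonzero derivative of the center $p(t)$ would violate the constancy of $|\nabla U|$ on a sphere, by an inner-product computation) shows all these balls are concentric at some $p\in\mathbb{R}^n$, so $U$ is radial about $p$, and the classification of radial solutions in \cite{KaLu} gives \eqref{E16bis}.

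The hard part is \eqref{plan-key}: in the semilinear case $n=2$ the logarithmic decay of $U$ at infinity available to Chen--Li trivializes the passage $R\to\infty$, but here we have no pointwise decay whatsoever. The vanishing of the annular boundary term must be squeezed out solely from the weighted Sobolev control \eqref{0825}, which is precisely why Section 2 is devoted to obtaining $\hat U\in W^{1,q}_{\hbox{loc}}$ with uniform constants independent of the singular structure.
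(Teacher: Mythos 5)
Your proposal follows the paper's route almost exactly through the main body of the argument: the same family of test functions (level cutoff times spatial cutoff) fed into the weak formulation, the same need for a well-chosen sequence $R_k\to\infty$ to kill the annular flux term, the same H\"older-plus-isoperimetric chain producing $M\geq c_n\omega_n$, and the same use of equality to force each $\Omega_t$ to be a ball with $|\nabla U|$ constant on its boundary. One small clarification: the vanishing of the annular term in \eqref{plan-key} uses \emph{two} pieces of information, the weighted bound from Corollary \ref{aiutocor} \emph{and} $|\Omega_t|<\infty$ (the paper balances both via \eqref{1044}--\eqref{10444} before H\"older); your sketch mentions only the first, though the second is cheap from $e^U\in L^1$ and $\sup U<\infty$.

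The genuine divergence is in the last step. You establish concentricity of the level balls directly (via the inner-product computation that differentiating $t=U(x(t)+R(t)\omega)$ with $\nabla U=-|\nabla U|\omega$ forces $x'(t)=0$) and then appeal to the radial classification of \cite{KaLu}. The paper instead applies the Pohozaev identity (Lemma \ref{Po}) on each ball $\Omega_t=B_{R(t)}(x(t))$ to derive a closed first-order ODE for $M(t)=\int_{\Omega_t}e^U$, solves it explicitly to obtain $R(t)$ (formula \eqref{1211}), and only afterwards proves concentricity by the same inner-product mechanism. The paper's order has two payoffs: it is self-contained, not relying on \cite{KaLu}; and the explicit formula delivers $R\in C^1$, which the paper uses to make the concentricity step rigorous (extending $x(t)$ to a locally Lipschitz function via $|x(t_1)-x(t_2)|\leq|R(t_1)-R(t_2)|$ and then integrating $x'=0$). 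In your ordering you would need to extract regularity of $R(t)$ and $x(t)$ without the explicit formula — this is doable (from absolute continuity of $t\mapsto|\Omega_t|$ via coarea) but your sketch of this step is the thinnest part of the proposal, and it is precisely the point where the paper's argument is longest and most careful.
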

\begin{proof} Since $U \in C^{1,\alpha}(\mathbb{R}^n)$, we can use Theorem 3.1 in \cite{Sci1} to get that $Z_k=\{x\in B_k(0): \nabla U(x)=0\}$ is a null set for all $k \in \mathbb{N}$. By the Lipschitz continuity of $U$ on $B_k(0)$, we deduce that 
$$\{t \in \mathbb{R}: \, \exists \ x \in \mathbb{R}^n \hbox{ s.t. }U(x)=t,\ \nabla U(x)=0\}=\bigcup_{k \in \mathbb{N}} U(Z_k)$$
is a null set in $\mathbb{R}$. Therefore $\Omega_t=\{U>t\}$ is a smooth set for a.e. $t \leq t_0$, $t_0=\displaystyle \sup_{\mathbb{R}^n}U$, and has bounded Lebesgue measure in view of $\displaystyle \int_{\mathbb{R}^n}e^U<+\infty$.

\medskip \noindent Let $t \leq t_0$ and $r>0$. Given $\delta,\eta>0$, let us define the following functions:
$$\chi_\delta(s)=\left\{ \begin{array}{ll} 0 &\hbox{if }s\leq t \\ \frac{s-t}{\delta}&\hbox{if }t\leq s \leq t+\delta \\ 1 &\hbox{if }s\geq t+\delta \end{array} \right. $$
and
$$\chi_\eta(x)=\left\{ \begin{array}{ll} 1 &\hbox{if }x \in B_r(0) \\ \frac{r+\eta-|x|}{\eta}&\hbox{if } x \in B_{r+\eta}(0)\setminus B_r(0) \\ 0 &\hbox{if }x \notin B_{r+\eta}. 
\end{array} \right. $$
We can use $\chi_\delta(U) \chi_\eta(x)$ as a test function in \eqref{meanE1} to get
\begin{equation} \label{1753}
\int_{\mathbb{R}^n} e^U \chi_\delta (U) \chi_\eta(x)=\frac{1}{\delta}\int_{\Omega_t \setminus \Omega_{t+\delta}} \chi_\eta |\nabla U|^n-\frac{1}{\eta}
\int_{B_{r+\eta}(0)\setminus B_r(0)} \chi_\delta (U) |\nabla U|^{n-2} \langle \nabla U, \frac{x}{|x|} \rangle.
\end{equation}
By the Lebesgue's monotone convergence theorem for the first term in the R.H.S. of \eqref{1753} we have that
$$\frac{1}{\delta} \int_{\Omega_t  \setminus \Omega_{t+\delta}} \chi_\eta |\nabla U|^n \to \frac{1}{\delta} \int_{(\Omega_t  \setminus \Omega_{t+\delta})\cap B_r(0)} |\nabla U|^n$$
as $\eta \to 0$. Since by the co-area formula we can write  
$$\int_{(\Omega_t  \setminus \Omega_{t+\delta})\cap B_r(0)}  |\nabla U|^n=\int_t^{t+\delta}  ds \int_{\partial \Omega_s \cap B_r(0)}  |\nabla U|^{n-1} d\sigma,$$
it results that the function $t \to \int_{\partial \Omega_t \cap B_r(0)} |\nabla U|^{n-1} d\sigma$ is in $L^1_{\hbox{loc}}(\mathbb{R})$, and as $\delta \to 0$ by the Lebesgue's differentiation Theorem we conclude that for a.e. $t \leq t_0$
\begin{equation} \label{150111}
\frac{1}{\delta}\int_{\Omega_t \setminus \Omega_{t+\delta}} \chi_\eta |\nabla U|^n \to \int_{\partial \Omega_t \cap B_r(0)} |\nabla U|^{n-1}d \sigma
\end{equation}
as $\eta \to 0$ and $\delta \to 0$. The second term in the R.H.S. of \eqref{1753} writes in radial coordinates as
$$\frac{1}{\eta} \int_{B_{r+\eta}(0)\setminus B_r(0)} \chi_\delta (U) |\nabla U|^{n-2} \langle \nabla U, \frac{x}{|x|} \rangle=\frac{1}{\eta}\int_r^{r+\eta}  ds \int_{\partial B_s(0)} \chi_\delta (U) |\nabla U|^{n-2} \langle \nabla U, \frac{x}{|x|} \rangle d \sigma,$$
and by the fundamental Theorem of calculus we get that for all $r>0$
$$\frac{1}{\eta}
\int_{B_{r+\eta}(0)\setminus B_r(0)} \chi_\delta (U) |\nabla U|^{n-2} \langle \nabla U, \frac{x}{|x|} \rangle
\to \int_{\partial B_r(0)} \chi_\delta (U) |\nabla U|^{n-2} \langle \nabla U, \frac{x}{|x|} \rangle d \sigma
$$
as $\eta \to 0$. By the Lebesgue's monotone convergence theorem we deduce that for all $r>0$
\begin{equation} \label{15012}
\frac{1}{\eta}
\int_{B_{r+\eta}(0)\setminus B_r(0)} \chi_\delta (U) |\nabla U|^{n-2} \langle \nabla U, \frac{x}{|x|} \rangle
\to \int_{\Omega_t \cap \partial B_r(0)} |\nabla U|^{n-2} \langle \nabla U, \frac{x}{|x|} \rangle d \sigma
\end{equation}
as $\eta \to 0$ and $\delta \to 0$. Letting $\eta \to 0$ and $\delta \to 0$ in \eqref{1753}, by \eqref{150111}-\eqref{15012} we finally get that
\begin{equation} \label{1756}
\int_{\Omega_t \cap B_r(0)} e^U =\int_{\partial \Omega_t \cap B_r(0)} |\nabla U|^{n-1} d \sigma-
\int_{\Omega_t \cap \partial B_r(0)}   |\nabla U|^{n-2} \langle \nabla U, \frac{x}{|x|} \rangle d \sigma.
\end{equation}
for all $r>0$ and a.e. $t\leq t_0$ (possibly depending on $r$) in view of the Lebesgue's monotone convergence theorem. 

\begin{oss} \label{rem0832} We aim to let $r \to +\infty$ in \eqref{1756}. In \cite{ChLi} no special care is required since for $n=2$  $U$ has a logarithmic behavior at infinity and then $\Omega_t$ is a bounded set. When $n>2$ we still don't know that $U$ behaves logarithmically at infinity and the validity of Theorem \ref{iso0944} is crucial in the next Section to establish such a property. Our argument relies instead on \eqref{1712} and on the finite measure property of $\Omega_t$, compare with \cite{Li}. 
\end{oss}
\noindent In radial coordinates we can write
\begin{equation} \label{1018}
|\Omega_t|= \int_0^\infty  dr \int_{\Omega_t \cap \partial B_r(0)} d\sigma <+\infty,\qquad
\int_{\mathbb{R}^n \setminus B_1(0)} \frac{|\nabla U|^q}{|x|^{2(n-q)}}=\int_1^\infty  \frac{dr}{r^{2(n-q)}} \int_{\partial B_r(0)} |\nabla U|^q d \sigma<+\infty
\end{equation}
in view of \eqref{1712}. We claim that for all $M\geq 1$ there exists $r\geq M$ so that
$$\int_{\Omega_t \cap \partial B_r(0)} d\sigma \leq \frac{1}{r} \quad \hbox{ and }\quad \frac{1}{r^{2(n-q)}} \int_{\partial B_r(0)} |\nabla U|^q d \sigma \leq \frac{1}{r}. $$
Indeed, if the claim were not true, we would find $M\geq 1$ so that for all $r \geq M$ there holds either
\begin{equation} \label{1049}
\int_{\Omega_t \cap \partial B_r(0)} d\sigma > \frac{1}{r} 
\end{equation}
or
\begin{equation} \label{1050}
\frac{1}{r^{2(n-q)}} \int_{\partial B_r(0)} |\nabla U|^q d \sigma > \frac{1}{r}. 
\end{equation}
Setting $I=\{r \geq M: \, \eqref{1049} \hbox{ holds}\}$ and $II=[M,\infty)\setminus I$, we have that
\begin{equation} \label{1135}
\int_I \frac{dr}{r}< \int_M^\infty  dr \int_{\Omega_t \cap \partial B_r(0)} d\sigma \leq |\Omega_t|
\end{equation}
and
\begin{equation} \label{11366}
\int_{II} \frac{dr}{r}< \int_M^\infty  \frac{dr}{r^{2(n-q)}} \int_{\partial B_r(0)} |\nabla U|^q d \sigma
\leq \int_{\mathbb{R}^n \setminus B_1(0)} \frac{|\nabla U|^q}{|x|^{2(n-q)}}
\end{equation}
since \eqref{1050} does hold for all $r \in II$. Summing up \eqref{1135}-\eqref{11366} we get that
$$\infty=\int_M^\infty \frac{dr}{r} \leq |\Omega_t|+\int_{\mathbb{R}^n \setminus B_1(0)} \frac{|\nabla U|^q}{|x|^{2(n-q)}}$$
in contradiction with \eqref{1018}, and the claim is established.

\medskip \noindent Thanks to the claim we can construct a sequence $r_k \to +\infty$ so that
\begin{equation} \label{1044}
\int_{\Omega_t \cap \partial B_{r_k}(0)} d\sigma \leq \frac{1}{r_k},\quad \frac{1}{r_k^{2(n-q)}} \int_{\partial B_{r_k}(0)} |\nabla U|^q d \sigma \leq \frac{1}{r_k}. 
\end{equation}
By \eqref{1044} and the H\"older's inequality we deduce the crucial estimate
\begin{equation} \label{10444}
\int_{\Omega_t \cap \partial B_{r_k}(0)} |\nabla U|^{n-1} d \sigma
\leq \left(\int_{\Omega_t \cap \partial B_{r_k}(0)} |\nabla U|^q d \sigma \right)^{\frac{n-1}{q}}
\left(\int_{\Omega_t \cap \partial B_{r_k}(0)} d \sigma \right)^{\frac{q-(n-1)}{q}}\leq
\frac{1}{r_k^{1-2\frac{(n-q)(n-1)}{q}}} \to 0 
\end{equation}
by choosing $q \in (n-1,n)$ sufficiently close to $n$.

\medskip \noindent Choosing $r=r_k$ in \eqref{1756} and letting $k \to +\infty$ we get that
\begin{equation} \label{1829}
\int_{\Omega_t} e^U =\int_{\partial \Omega_t } |\nabla U|^{n-1} d \sigma
\end{equation}
for a.e. $t\leq t_0$ in view of \eqref{10444}. Arguing as previously, by the co-area formula and the Lebesgue's differentiation theorem we have that
$$|\Omega_t|=\lim_{r \to +\infty} |\Omega_t \cap B_r(0)|=\lim_{r \to +\infty} \int_t^\infty ds \int_{\partial \Omega_s \cap B_r(0)} \frac{d\sigma}{|\nabla U|}
=\int_t^\infty ds \int_{\partial \Omega_s } \frac{d\sigma}{|\nabla U|},$$
and then
\begin{equation} \label{15011}
-\frac{d}{dt} |\Omega_t|=
\int_{\partial \Omega_t} \frac{d \sigma}{|\nabla U|}
\end{equation}
for a.e. $t\leq t_0$. Thanks to \eqref{1829}-\eqref{15011}, by the H\"older's and the isoperimetric inequalities we can now compute 
\begin{eqnarray}
-\frac{d}{ d  t} \left(\int_{\Omega_t } e^U dx\right)^{\frac{n}{n-1}}
&=& -\frac{n}{n-1} \left( \int_{\Omega_t} e^U dx \right)^{\frac{1}{n-1}} e^t \frac{d}{dt}|\Omega_t| \nonumber \\
&=&
\frac{n}{n-1}\left( \int_{\partial \Omega_t } |\nabla U|^{n-1} d\sigma \right)^{\frac{1}{n-1}} e^t \int_{\partial \Omega_t} \frac{d\sigma}{|\nabla U|} \nonumber \\
&\geq&
\frac{n}{n-1}  e^t |\partial \Omega_t|^{\frac{n}{n-1}}\geq (c_n \omega_n)^{\frac{1}{n-1}} e^t   |\Omega_t| \label{1736}
\end{eqnarray}
for a.e. $t \leq t_0$. Since $t \to \int_{\Omega_t } e^U dx $ is a monotone decreasing function, we get that
\begin{equation} \label{1159}
\left(\int_{\mathbb{R}^n} e^U dx\right)^{\frac{n}{n-1}} \geq \int_{-\infty}^{t_0} -\frac{d}{ d  t} \left(\int_{\Omega_t } e^U dx\right)^{\frac{n}{n-1}}
 dt \geq (c_n \omega_n)^{\frac{1}{n-1}} \int_{\mathbb{R}^n} e^U dx.
\end{equation}
Since by assumption $ \int_{\mathbb{R}^n} e^U dx \leq c_n \omega_n$, we get that
$$ \int_{\mathbb{R}^n} e^U dx = c_n \omega_n$$
and the inequalities in \eqref{1736}-\eqref{1159} are actually equalities. We have that for a.e. $t \leq t_0$
\begin{itemize}
\item $\Omega_t=B_{R(t)}(x(t))$ for some $R(t)>0$ and $x(t) \in \mathbb{R}^n$, since $\Omega_t$ in an extremal of the isoperimetric inequality
\item $|\nabla U|^{n-1}$ is a multiple of $\frac{1}{|\nabla U|}$ on $\partial \Omega_t$,
\item the function $M(t)=\int_{\Omega_t} e^U dx$ is absolutely continuous in $(-\infty,t_0)$  with
\begin{equation}\label{1907}
\frac{1}{n-1}M^{\frac{1}{n-1}}(t) M'(t)=\frac{1}{n} \frac{d}{ d  t} M^{\frac{n}{n-1}}(t)
=- (c_n \omega_n)^{\frac{1}{n-1}}\frac{\omega_n}{n} e^t   R^n(t).
\end{equation}
\end{itemize}

\medskip \noindent The aim now is to derive an equation for $M(t)$ by means of some Pohozaev identity. Let us emphasize that $U \in C^{1,\alpha}(\mathbb{R}^n)$ and the classical Pohozaev identities usually require more regularity. In \cite{DFSV} a self-contained proof  is provided in the quasilinear case, which reads in our case as
\begin{lm} \label{Po}
Let $\Omega \subset \mathbb{R}^{n}$, $n\geq2$, be a smooth bounded domain and $f$ be a locally Lipschitz continuous function. Then, there holds
$$n \int_{\Omega}F(U)=\int_{\partial\Omega} \left[F(U) \langle x-y,\nu\rangle
+|\nabla U|^{n-2}\langle x-y,\nabla U \rangle \partial_{\nu}U-\frac{|\nabla U|^n}{n}\langle x-y,\nu\rangle\right] $$
for all $y \in \mathbb{R}^n$ and all weak solution $U \in C^{1,\alpha}(\Omega)$ of $-\Delta_n U=f(U)$ in $\Omega$, where $F(t)=\displaystyle\int_{0}^{t}f(s)ds$ and $\nu$ is the unit outward normal vector at $\partial \Omega$.
\end{lm}
\noindent Let us re-write \eqref{1829} as
\begin{equation} \label{1906}
M(t)= n \omega_n |\nabla U|^{n-1} R^{n-1}(t)
\end{equation}
and use Lemma \ref{Po} on $\Omega_t=B_{R(t)}(x(t))$ with $y=x(t)$ to deduce
\begin{equation} \label{1908}
M(t)=\omega_n e^t R^n(t)  +\frac{n-1}{n} \omega_n  |\nabla U|^n R^n(t) 
\end{equation}
in view of $U=t$ and $|\nabla U|=-\partial_\nu U$ constant on $\partial \Omega_t$. By \eqref{1906}-\eqref{1908} we have that
\begin{equation} \label{1947}
\omega_n e^t R^n(t)=M(t)- (c_n \omega_n)^{-\frac{1}{n-1}} M^{\frac{n}{n-1}}(t),
\end{equation}
which, inserted into \eqref{1907}, gives rise to
\begin{equation}\label{1925}
M'(t)=- \frac{n-1}{n}(c_n \omega_n)^{\frac{1}{n-1}} M^{\frac{n-2}{n-1}}(t)+ \frac{n-1}{n} M (t)
\end{equation}
for a.e. $t\leq t_0$. Since $M$ is absolutely continuous in $\mathbb{R}$ and
$$\frac{1}{n-1}   \int  \frac{dM}{M-(c_n \omega_n)^{\frac{1}{n-1}} M^{\frac{n-2}{n-1}}}= \ln |M^{\frac{1}{n-1}} -(c_n \omega_n)^{\frac{1}{n-1}}  |,$$
we can integrate \eqref{1925} to get
\begin{equation} \label{1946}
M(t) =c_n \omega_n  \left[1-e^{\frac{t-t_0}{n}} \right]^{n-1}
\end{equation}
for all $t \leq  t_0$, in view of $M(t_0)=0$. Inserting \eqref{1946} into \eqref{1947} we deduce that
\begin{equation} \label{1211}
R^n(t)=c_n   \left[1-e^{\frac{t-t_0}{n}} \right]^{n-1} e^{-\frac{(n-1)t}{n}-\frac{t_0}{n}}
\end{equation}
for a.e. $t \leq t_0$. Since $R(t)$ is monotone, notice that \eqref{1211} is valid for all $t\leq t_0$ and can be re-written as
\begin{equation} \label{1216}
e^t=\frac{c_n \lambda^n}{(1+\lambda^{\frac{n}{n-1}}R^{\frac{n}{n-1}}(t))^n}
\end{equation}
where $\lambda=(\frac{e^{t_0}}{c_n})^{\frac{1}{n}}.$  To conclude, we just need to show that $x(t)=x_0$. First notice that a.e. $t_1,t_2\leq t_0$ either $x(t_1)=x(t_2)$ or, assuming for example $t_2<t_1$, $B_{R(t_1)}(x(t_1))\subset \subset B_{R(t_2)}(x(t_2))$ and $x(t_2)-R(t_2) \frac{x(t_2)-x(t_1)}{|x(t_2)-x(t_1)|} \in \partial B_{R(t_2)}(x(t_2))$ implies
$$R(t_2)-|x(t_1)-x(t_2)|=\big||x(t_2)-x(t_1)|-R(t_2)\big|=| x(t_2)-R(t_2) \frac{x(t_2)-x(t_1)}{|x(t_2)-x(t_1)|}-x(t_1)|> R(t_1).$$
In both cases, we have that $|x(t_2)-x(t_1)|\leq |R(t_2)-R(t_1)|$ for a.e. $t_1,t_2 \leq t_0$. Since $R \in C(-\infty,t_0]\cap C^1(-\infty,t_0)$, $x(t)$ can be uniquely extended as a map $\tilde x(t)$ which is continuous in $(-\infty,t_0]$ and locally Lipschitz in $(-\infty,t_0)$. Given $t<t_0$ we can alway find $t_n \downarrow t$ so that $\Omega_{t_n}=B_{R(t_n)}(x(t_n))$, $x(t_n)=\tilde x(t_n)$, and then there holds
$$\Omega_t=\bigcup_{n \in \mathbb{N}} \Omega_{t_n}=\bigcup_{n \in \mathbb{N}} B_{R(t_n)}(x(t_n))=B_{R(t)}(\tilde x(t))$$
by the continuity of $R(t)$ and $\tilde x(t)$. Identifying $x$ and $\tilde x$, we can assume that $x \in C(-\infty,t_0] \cap Lip_{\hbox{loc}}(-\infty,t_0)$ and $\Omega_t=B_{R(t)}(x(t))$ for all $t\leq t_0$. Use now the property $t=U(x(t)+R(t) \omega)$, $\omega \in \mathbb{S}^n$, to deduce
\begin{eqnarray*}
h&=& U(x(t+h)+R(t+h)\omega)-U(x(t)+R(t)\omega)= \langle \nabla U(x(t)+R(t)\omega), x(t+h)-x(t) \rangle\\
&&+
[R(t+h)-R(t)]\langle \nabla U(x(t)+R(t)\omega), \omega \rangle+o(|x(t+h)-x(t)|+|R(t+h)-R(t)|)
\end{eqnarray*}
as $h \to 0$, uniformly in $\omega \in \mathbb{S}^n$. Since $|\nabla U|$ is a non-zero constant on $\partial \Omega_t$ for a.e. $t\leq t_0$ and $\Omega_t=B_{R(t)}(x(t))$, we have that 
$$\nabla U(x(t)+R(t)\omega)=- |\nabla U| \omega,$$
and then, applied to $-\omega$ and $\omega$, it yields that
\begin{eqnarray*}
&& h=  |\nabla U| \langle x(t+h)-x(t),\omega \rangle- [R(t+h)-R(t)] |\nabla U| +o(|x(t+h)-x(t)|+|R(t+h)-R(t)|)\\
&& h=- |\nabla U| \langle x(t+h)-x(t),\omega \rangle- [R(t+h)-R(t)] |\nabla U| +o(|x(t+h)-x(t)|+|R(t+h)-R(t)|).
\end{eqnarray*}
Since $|\nabla U| \not= 0$, the difference then gives
$$\langle x(t+h)-x(t),\omega \rangle=o(|x(t+h)-x(t)|+|R(t+h)-R(t)|)$$
as $h \to 0$, uniformly in $\omega \in \mathbb{S}^n$. If $x(t+h)\not= x(t)$, the choice $\omega=\frac{x(t+h)-x(t)}{|x(t+h)-x(t)|}$ leads to
$$|\frac{x(t+h)-x(t)}{h}|\leq o(|\frac{R(t+h)-R(t)}{h}|) \to 0$$
as $h \to 0$. So we have shown that $x'(t)=\displaystyle \lim_{h\to 0}\frac{x(t+h)-x(t)}{h}=0$ for a.e. $t\leq t_0$.
Since $x \in Lip_{\hbox{loc}}(-\infty,t_0)$, by integration we deduce that $x(t)$ is constant for all $t \leq t_0$, say $x(t)=x_0$.

\medskip \noindent Given $x \in \mathbb{R}^n \setminus \{x_0\}$, by \eqref{1211} we can find a unique $t<t_0$ so that $R(t)=|x-x_0|$ and then
$$e^{U(x)}=\frac{c_n \lambda^n}{(1+\lambda^{\frac{n}{n-1}}|x-x_0|^{\frac{n}{n-1}})^n}$$
in view of \eqref{1216} and $U=t$ on $\partial B_{R(t)}(x_0)$. The proof is complete since we have shown that $U=U_{\lambda,x_0}$ for some $\lambda>0$ and $x_0 \in \mathbb{R}^n$.

\end{proof}

\section{Behavior of $U$ at infinity}
\noindent The estimates in Proposition \ref{T4} are not sufficient to establish the logarithmic behavior of $U$ at infinity but are essentially optimal in the limiting case $f \in L^1(\Omega)$. According to \cite{Ser1,Ser2}, a bit more regularity on $f$ gives $L^\infty$-bounds as stated in
\begin{pro} \label{Serrinest}
Let $f \in L^p(\Omega)$, $p>1$, and assume \eqref{a1}-\eqref{a2}. Let $u \in W^{1,n}_0(\Omega)$  be a weak solution of $-\hbox{div } {\bf a}(x,\nabla u)=f$. 
Then 
$$\|u \|_\infty \leq C (\frac{\|f\|_p}{d}+1)^{\alpha_0} (|\Omega|+1)^{\beta_0}  \|u\|^{\bar q}_{\frac{npq_1}{p-1}}$$
for some constants $C,\alpha_0,\beta_0,\bar q>0$ just depending on $n,p$ and $q_1\geq 1$.
\end{pro}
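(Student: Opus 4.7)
The plan is to run a Moser-type iteration in the spirit of Serrin \cite{Ser1,Ser2}: test \eqref{1725} against truncated powers of $u$, exploit the coercivity \eqref{a2} at $q=0$ to recover $\int|\nabla u|^n$, and then bootstrap via a subcritical Sobolev embedding from the initial $L^{npq_1/(p-1)}$ control up to $L^\infty$.

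Concretely, for $M>0$ and a parameter $s\geq 1$ I would take
$\phi=T_M(u)\,|T_M(u)|^{s-1}\in W^{1,n}_0(\Omega)\cap L^\infty(\Omega)$
as test function in \eqref{1725} and introduce the Moser substitution $v_s=|T_M(u)|^{(s+n-1)/n}\mathrm{sgn}(u)\in W^{1,n}_0(\Omega)$, so that $|\nabla v_s|^n$ is proportional to $|T_M(u)|^{s-1}|\nabla T_M(u)|^n$. Using \eqref{a2} in the form $\langle{\bf a}(x,\nabla u)-{\bf a}(x,0),\nabla u\rangle\geq d|\nabla u|^n$ to extract coercivity, and treating the drift term via \eqref{a1}, this produces an energy inequality of shape
$$\int_\Omega|\nabla v_s|^n\leq \frac{C\,s^n}{d\,(s+n-1)^{n-1}}\int_\Omega |f|\,|T_M(u)|^s.$$
On the left I would pass from $\|\nabla v_s\|_{L^n}$ to $\|\nabla v_s\|_{L^{\bar q}}$ by H\"older (at cost $|\Omega|^{1/\bar q-1/n}$) for some fixed $\bar q\in(1,n)$, and then apply Sobolev $\mathcal{D}^{1,\bar q}(\mathbb{R}^n)\hookrightarrow L^{n\bar q/(n-\bar q)}(\mathbb{R}^n)$; on the right, H\"older yields $\int|f|\,|T_M(u)|^s\leq \|f\|_p\,\|T_M(u)\|_{L^{sp/(p-1)}}^s$. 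The outcome is a recursive estimate of the form
$$\|T_M(u)\|_{L^{\gamma(s+n-1)/n}}\leq \bigl(K\,s\bigr)^{1/(s+n-1)}\,\|T_M(u)\|_{L^{sp/(p-1)}}^{s/(s+n-1)},$$
with $\gamma=n\bar q/(n-\bar q)$ and $K=C(n,p,\bar q)\bigl(\|f\|_p/d+1\bigr)\bigl(|\Omega|+1\bigr)^\beta$.

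I would then iterate along a sequence $s_k$ with $s_0=nq_1$ and $s_{k+1}p/(p-1)=\gamma(s_k+n-1)/n$; geometric growth is guaranteed by choosing $\bar q$ close enough to $n$ so that $\gamma>np/(p-1)$. Since $s_k$ grows geometrically, $\sum_k(\log s_k)/s_k<\infty$, so both $\prod_k(K s_k)^{1/(s_k+n-1)}$ and $\prod_k s_k/(s_k+n-1)$ converge, the latter to a positive number which plays the role of the exponent on $\|u\|_{L^{npq_1/(p-1)}}$ in the final estimate. Letting $M\to\infty$ and $k\to\infty$ and using $\|u\|_\infty=\lim_{r\to\infty}\|u\|_r$ yields the claimed $L^\infty$ bound, the constants depending only on $n,p,q_1$.

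The main obstacle is precisely the constraint highlighted in Remark \ref{R1}: the multiplicative constants must \emph{not} depend on $\|a\|_{L^{n/(n-1)}(\Omega)}$, since in the application to ${\bf a}$ defined by \eqref{2022} this norm is likely to blow up as the underlying singularity is approached. The naive Young's-inequality estimate $|\langle{\bf a}(x,0),\nabla u\rangle|\leq c\,a(x)|\nabla u|\leq \tfrac{d}{2}|\nabla u|^n+C_d\,a(x)^{n/(n-1)}$ would introduce exactly such a dependence and is therefore forbidden. The remedy, mirroring the strategy of Proposition \ref{T4}, is to keep the monotone form of \eqref{a2} intact throughout the iteration, testing not with $\phi$ directly but with increments $T_{k+\tau}(\phi)-T_k(\phi)$ and exploiting the identity $T_M(u)|T_M(u)|^{s-1}=\int_0^M\partial_\tau\bigl[T_\tau(u)|T_\tau(u)|^{s-1}\bigr]d\tau$, so that the contribution of ${\bf a}(x,0)$ telescopes into a boundary/coercivity term that is absorbed without invoking $\|a\|_{L^{n/(n-1)}}$.
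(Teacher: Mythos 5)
Your plan is essentially the same Moser iteration that the paper runs. The paper tests with $\operatorname{sign}(u)\,G(|u|)$ where $G(s)=F(s)[F'(s)]^{n-1}$ and $F$ is a truncated $q$-th power; this is exactly the $T_M(u)|T_M(u)|^{s-1}$ you propose (with $s=nq-n+1$), and the quantity $F(|u|)$ plays the role of your $v_s$. After the coercivity step both arguments apply a subcritical Sobolev embedding of the type $\mathcal D^{1,\bar q}\hookrightarrow L^{n\bar q/(n-\bar q)}$ (the paper absorbs this into the map $W^{1,n}_0\hookrightarrow L^{2mn}$ at the cost of an $|\Omega|$ power), H\"older on the $f$-term, and then a geometric iteration $q_j=2^{j-1}q_1$ whose products converge because $\sum(\log q_j)/q_j<\infty$. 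So up to bookkeeping, you have reconstructed the proof.

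Where you go off track is the last paragraph. Your concern about the drift term $\langle {\bf a}(x,0),\nabla u\rangle$ is legitimate — the inequality $\langle{\bf a}(x,\nabla u),\nabla u\rangle\ge d|\nabla u|^n$ used in \eqref{1607} does not follow from \eqref{a2} alone, and estimating the discrepancy via \eqref{a1} would smuggle in $\|a\|_{L^{n/(n-1)}}$, which Remark \ref{R1} forbids. But the remedy you sketch does not work: the telescoping of Proposition \ref{T4} succeeds precisely because $u$ is compared with the $n$-harmonic lift $h$, so the increment test function kills ${\bf a}(x,\nabla h)$ exactly; there is no analogous cancellation of the constant-in-$p$ term ${\bf a}(x,0)$ when you only truncate $u$ itself — after writing $T_M(u)|T_M(u)|^{s-1}$ as a $\tau$-integral of increments and summing, you just recover the original identity, and the ${\bf a}(x,0)$ contribution reappears unchanged. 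The actual resolution, which the paper uses (explicitly in the proof of Theorem \ref{aiuto} and implicitly here), is that for the only choice of ${\bf a}$ the proposition is applied to, namely \eqref{2022}, one has ${\bf a}(x,0)=0$ identically; then $\langle{\bf a}(x,\nabla u),\nabla u\rangle=\langle{\bf a}(x,\nabla u)-{\bf a}(x,0),\nabla u\rangle\ge d|\nabla u|^n$ with no extra term, and the iteration closes with constants depending only on $n,p,q_1$. Either state ${\bf a}(x,0)=0$ as a standing hypothesis or note it at the point of use; the telescoping argument should be dropped.
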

\begin{proof}
Given $q \geq 1$ and $k>0$ 
set 
$$F(s)=\left \{ \begin{array}{ll} s^q &\hbox{if } 0 \leq s \leq k\\  q k^{q-1} s-(q-1)k^q &\hbox{if }s\geq k  \end{array}\right.$$
and $G(s)=F(s) [F'(s)]^{n-1}$. Notice that $G$ is a piecewise $C^1-$function with a corner just at $s=k$ so that 
\begin{equation} \label{1508}
 [F'(s)]^n \leq G'(s) ,\qquad G(s) \leq q^{n-1} F^{\frac{n(q-1)+1}{q}}(s).
\end{equation}
Since $G(|u|)\in W^{1,n}_0(\Omega)$ for $G$ is linear at infinity, use $\hbox{sign}(u) G(|u|)$ as a test function in the equation of $u$ to get
\begin{eqnarray} \label{1607}
\int_\Omega |\nabla F(|u|)|^n \leq \frac{1}{d} \int_\Omega G'(|u|) \langle {\bf a}(x,\nabla u), \nabla  u \rangle=\frac{1}{d}
 \int_\Omega f \hbox{ sign}(u) G(|u|)
\end{eqnarray}
in view of \eqref{a2} and \eqref{1508}. Setting $m=\frac{p}{p-1}$ in view of $p>1$, by \eqref{1508} and the H\"older's inequality we deduce that
\begin{eqnarray} 
|\int_\Omega f \hbox{ sign}(u) G(|u|)| \leq  
q^{n-1} \int_\Omega |f| F^{\frac{n(q-1)+1}{q}}(|u|)
\leq q^{n-1} |\Omega|^{\frac{n-1}{mnq}} \|f\|_p \left(\int_\Omega F^{m n}(|u|) \right)^{\frac{n(q-1)+1}{mnq}}.
\label{1025}
\end{eqnarray}
The Sobolev embedding Theorem applied on $F(|u|)\in W^{1,n}_0(\Omega)$ now implies that
\begin{eqnarray*}
\left(\int_\Omega  F^{2mn}(|u|) \right)^{\frac{1}{2m}}
\leq C \int_\Omega |\nabla F(|u|)|^n \leq
\frac{C}{d} q^{n-1} |\Omega|^{\frac{n-1}{mnq}} \|f\|_p \left(\int_\Omega F^{m n}(|u|) \right)^{\frac{n(q-1)+1}{mnq}}
\end{eqnarray*}
for some $C\geq 1$ in view of \eqref{1607}-\eqref{1025}. Since $F(s) \to s^q$ in a  monotone way as $k \to +\infty$, we have that
\begin{eqnarray} \label{1622}
\left(\int_\Omega  |u|^{2m nq} \right)^{\frac{1}{2mq}}
\leq \hbox{exp }\left[ \frac{1}{q} \ln \frac{C\|f\|_p}{d} +\frac{(n-1) \ln  |\Omega|}{mnq^2} +(n-1) \frac{\ln q}{q} \right] \left(\int_\Omega |u|^{mnq} \right)^{\frac{1}{mq}[1-\frac{n-1}{nq}]}.
\end{eqnarray}
Assume now that $u \in L^{mnq_1}(\Omega)$ for some $q_1 \geq 1$. Setting $q_j=2^{j-1} q_1$, $j \in \mathbb{N}$, by iterating \eqref{1622} we deduce that
\begin{eqnarray*} 
&&\left(\int_\Omega  |u|^{m n q_{j+1}} \right)^{\frac{1}{m q_{j+1}}}
\leq 
\hbox{exp }\left[\frac{1}{q_j} \ln \frac{C\|f\|_p}{d} +\frac{(n-1) \ln  |\Omega|}{mnq_j^2}+(n-1) \frac{\ln q_j}{q_j}\right] \left[ \left(\int_\Omega |u|^{mnq_j} \right)^{\frac{1}{mq_j}} \right]^{1-\frac{n-1}{n q_j}}\\
&&
\leq 
\hbox{exp }\left[ \ln \frac{C\|f\|_p}{d} \sum_{k=j-1}^j \frac{a_k^j}{q_k}+\frac{(n-1) \ln  |\Omega|}{mn}\sum_{k=j-1}^j  \frac{a_k^j}{q_k^2} +(n-1) \sum_{k=j-1}^j \frac{a_k^j \ln q_k}{q_k} \right] \left[ \left(\int_\Omega |u|^{mnq_{j-1}} \right)^{\frac{1}{mq_{j-1}}} \right]^{a_{j-2}^j}\\
&&\dots \leq 
\hbox{exp }\left[\ln \frac{C\|f\|_p}{d} \sum_{k=1}^j \frac{a_k^j}{q_k}+\frac{(n-1) \ln  |\Omega|}{mn}\sum_{k=1}^j  \frac{a_k^j}{q_k^2}+(n-1) \sum_{k=1}^j \frac{a_k^j \ln q_k}{q_k} \right] 
\left(\int_\Omega |u|^{mnq_1} \right)^{\frac{a_0^j}{mq_1}}
\end{eqnarray*}
where 
$$a_k^j=\left\{\begin{array}{ll} [1-\frac{n-1}{n q_{k+1}}] \times \dots \times [1-\frac{n-1}{n q_j}]& \hbox{if }0\leq k <j\\
1& \hbox{if }k=j. \end{array} \right. $$
Since $a_k^j \leq 1$ for all $k=0,\dots,j$, we have that
\begin{eqnarray*}
\alpha_0&=& \frac{1}{n}\sup_{j \in \mathbb{N}} \sum_{k=1}^j \frac{a_k^j}{q_k}\leq
 \frac{1}{n}\sup_{j \in \mathbb{N}} \sum_{k=1}^j \frac{1}{q_k}
= \frac{2}{n} \sum_{k=1}^\infty  \frac{1}{q_1 2^k}<\infty\\
\beta_0 &=& \frac{n-1}{mn^2}\sup_{j \in \mathbb{N}} \sum_{k=1}^j  \frac{a_k^j}{q_k^2}\leq
\frac{4(n-1)}{mn^2} \sum_{k=1}^\infty  \frac{1}{q_1^2 4^k}<+\infty \\
\gamma_0&=&\frac{n-1}{n} \sup_{j \in \mathbb{N}} \sum_{k=1}^j \frac{a_k^j \ln q_k}{q_k}
\leq 2\frac{n-1}{n}  \sum_{k=1}^\infty \frac{(k-1)\ln 2+\ln q_1}{ q_1 2^k}<+\infty,
\end{eqnarray*}
and then it follows that
\begin{eqnarray} \label{1136} 
\left(\int_\Omega  |u|^{m n q_{j+1}} \right)^{\frac{1}{m n q_{j+1}}}
\leq 
\hbox{exp }\left[\alpha_0 \ln C(\frac{\|f\|_p}{d}+1)+ \beta_0 \ln ( |\Omega|+1)+ \gamma_0\right] \left(\int_\Omega |u|^{mnq_1} \right)^{\frac{a_0^j}{mn q_1}}.
\end{eqnarray}
Since
$$\bar q=\lim_{j \to +\infty} a_0^j= \prod_{k=1}^\infty (1-\frac{n-1}{nq_k})<\infty,$$
letting $j \to +\infty$ in \eqref{1136} we finally deduce that
\begin{eqnarray*}  
\|u \|_\infty \leq e^{\alpha_0 \ln C+\gamma_0} (\frac{\|f\|_p}{d}+1)^{\alpha_0} (|\Omega|+1)^{\beta_0}  \|u\|^{\bar q}_{mnq_1}
\end{eqnarray*}
and the proof is complete.
\end{proof}

\noindent Thanks to Theorem \ref{iso0944} we are just concerned with the range
\begin{equation} \label{1710}
\int_{\mathbb{R}^n} e^U \geq c_n\omega_n.
\end{equation}
By Proposition \ref{Serrinest} we can improve the estimates in Section 2 to get
\begin{thm} \label{aiutobis}
Let $U$ be a solution of \eqref{E1} which satisfies \eqref{1710}. Then $\hat U(x)=U(\frac{x}{|x|^2})$ satisfies
\begin{equation} \label{gamma0}
\hat U(x)-(\frac{\gamma_0}{n\omega_n})^{\frac{1}{n-1}}  \ln |x| \in L^\infty_{\hbox{loc}}(\mathbb{R}^n)
\end{equation}
and
\begin{equation} \label{gamma1}
\sup_{|x|=r} |x| \big|\nabla \left( \hat U(x)-(\frac{\gamma_0}{n\omega_n})^{\frac{1}{n-1}} \ln |x| \right) \big| \to 0
\end{equation}
for a sequence $r \to 0$, where $\gamma_0=\int_{\mathbb{R}^n} e^U$.
\end{thm}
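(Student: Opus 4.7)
The plan is to continue the analysis of $\hat U$ at the origin begun in the proof of Theorem \ref{aiuto}, exploiting the decomposition $\hat U=H_0+U_0$ on a small ball $B_r(0)$, where $H_0$ is $n$-harmonic in $B_r(0)\setminus\{0\}$ with $H_0=\hat U$ on $\partial B_r(0)$ and $H_0\le \sup_{\mathbb R^n}U$, while $U_0\ge 0$ belongs to $W^{1,q}_0(B_r(0))$ with $e^{U_0}\in L^p(B_r(0))$ for all $1\le q<n$ and all $p\ge 1$, provided $r$ is small enough. The first move is to rule out that $H_0$ has a removable singularity at $0$: otherwise $H_0$ would be bounded on $B_r(0)$, and $U_0\ge 0$ would yield $e^{\hat U}\ge e^{H_0}\ge e^{-C}$, forcing $\int_{B_r(0)} e^{\hat U}/|x|^{2n}=+\infty$ in contradiction with \eqref{E1equiv}. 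Serrin's theorem on isolated singularities \cite{Ser1,Ser2} then produces $c>0$ and a continuous function $h_0$ defined up to $0$ such that $H_0(x)=c\log|x|+h_0(x)$ in a neighborhood of $0$.

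The heart of the argument is to identify $c=c_\ast:=(\gamma_0/(n\omega_n))^{1/(n-1)}$. Plugging into \eqref{meanE1equiv} a test function of the form $\Phi_{R,\epsilon}(x)=\eta_R(x)(1-\chi(x/\epsilon))$, where $\chi\equiv 1$ on $B_1(0)$ is supported in $B_2(0)$ and $\eta_R$ is a compactly supported smooth cutoff equal to $1$ on $B_R(0)$ (so that the product, vanishing near $0$ and with compact support, belongs to $\hat H$), one can send $\epsilon\to 0^+$ and $R\to +\infty$. The right-hand side tends to $\gamma_0$, while the gradient side reduces to a boundary-flux concentrated near $0$ whose leading contribution is $c^{n-1}n\omega_n$ by the distributional identity $\mathrm{div}(x/|x|^n)=n\omega_n\delta_0$ for the fundamental solution of $\Delta_n$; the remainder is controlled by the $L^q$-integrability of $\nabla(\hat U-c\log|x|)$ for $q<n$, guaranteed by Theorem \ref{aiuto}. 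The outcome is $c^{n-1}n\omega_n=\gamma_0$, i.e.\ $c=c_\ast$, and the standing assumption $\gamma_0\ge c_n\omega_n$ forces $c_\ast\ge n^2/(n-1)>n$, an inequality essential below.

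With $c_\ast>n$ in hand, the pointwise bound $e^{\hat U}/|x|^{2n}\le C|x|^{c_\ast-2n}e^{U_0}$ combines with $|x|^{(c_\ast-2n)p}\in L^1$ for every $1<p<n/(2n-c_\ast)$ and $e^{U_0}\in L^{p'}$ for every $p'\ge 1$ to place $e^{\hat U}/|x|^{2n}$ in $L^{\tilde p}(B_r(0))$ for some $\tilde p>1$, after shrinking $r$. Proposition \ref{Serrinest} applied to the equation $-\mathrm{div}\,{\bf a}(x,\nabla U_0)=e^{\hat U}/|x|^{2n}$ in $B_r(0)$ with ${\bf a}$ as in \eqref{2022}---the lack of control on $\|a\|_{L^{n/(n-1)}}$ being harmless in view of Remark \ref{R1}---then yields $\|U_0\|_\infty<\infty$. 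Hence $\hat U-c_\ast\log|x|=h_0+U_0\in L^\infty_{\mathrm{loc}}(\mathbb R^n)$, which is \eqref{gamma0}. For the gradient asymptotic \eqref{gamma1} I would set $V:=\hat U-c_\ast\log|x|\in L^\infty$ and rescale $W_\rho(y):=\hat U(\rho y)-c_\ast\log\rho=c_\ast\log|y|+V(\rho y)$. The scaling of $\Delta_n$ gives
\[
-\Delta_n W_\rho=\rho^{\,c_\ast-n}\,\frac{e^{W_\rho}}{|y|^{2n}}\qquad\text{on }\{1/2<|y|<2\},
\]
whose right-hand side tends to zero uniformly as $\rho\to 0^+$ thanks to $c_\ast>n$. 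Since $W_\rho$ is uniformly bounded on this annulus, the $C^{1,\alpha}$-estimates of DiBenedetto--Tolksdorf \cite{Dib,Tol} provide compactness; along any subsequence $W_\rho\to W_\infty$ in $C^1_{\mathrm{loc}}(\mathbb R^n\setminus\{0\})$ (by a diagonal extraction over annuli), with $W_\infty$ being $n$-harmonic and $W_\infty-c_\ast\log|y|$ bounded. Serrin's classification of isolated singularities then forces $W_\infty=c_\ast\log|y|+\mathrm{const}$, so $\nabla V(\rho y)\to 0$ uniformly on $|y|=1$, which is \eqref{gamma1}.

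The main obstacle is the identification of $c$ in the flux argument: without any a priori pointwise bound on $\nabla\hat U$ near $0$, the passage to the limit has to exploit only the $L^q$-integrability of $\nabla(\hat U-c\log|x|)$ for $q<n$ from Theorem \ref{aiuto}, together with the explicit distributional structure of $x/|x|^n$. Once that identity is secured, Serrin's sharp asymptotic, Proposition \ref{Serrinest} and the blow-up scheme assemble the rest of the proof in a rather standard fashion.
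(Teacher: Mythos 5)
Your overall architecture mirrors the paper's: decompose $\hat U = H_0 + U_0$ with $H_0$ the $n$-harmonic lifting of the boundary data and $U_0\geq 0$; use Serrin--Kichenassamy--Veron asymptotics for $H_0$; upgrade $e^{\hat U}/|x|^{2n}$ to $L^{\tilde p}$, $\tilde p>1$, to apply Proposition \ref{Serrinest} and get $U_0$ bounded; then perform a blow-down to obtain the gradient asymptotic. Two of your steps, however, do not hold up.

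The crucial gap is your direct identification $c = c_\ast := (\gamma_0/(n\omega_n))^{1/(n-1)}$ via the flux argument with test functions $\Phi_{R,\epsilon}$. You claim the remainder in the small-ball flux is controlled by $\nabla(\hat U-c\log|x|)\in L^q_{\mathrm{loc}}$ for $q<n$. But write $g=\nabla\hat U - cx/|x|^2$; the error term in the concentrated flux is of size
\begin{equation*}
\frac{1}{\epsilon}\int_{\{\epsilon<|x|<2\epsilon\}}\Bigl[\,|g|^{n-1}+\frac{|g|}{|x|^{n-2}}\,\Bigr],
\end{equation*}
and Hölder with exponent $q<n$ gives an upper bound of order $\|g\|_{L^q(\{\epsilon<|x|<2\epsilon\})}^{\,n-1}\,\epsilon^{(n-1)(q-n)/q}$. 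Since $q<n$ the power of $\epsilon$ is negative, while the $L^q$-norm on the annulus is only $o(1)$ with no quantified rate (it is the tail of a convergent integral). The product need not vanish, even along a subsequence. At this stage $U_0$ is still only known to be in $W^{1,q}_0$ for $q<n$; nothing controls $\nabla U_0$ pointwise near the origin. So the flux computation you propose is circular: it would be valid \emph{after} you know the pointwise gradient asymptotic \eqref{gamma1}, not before. The paper sidesteps this with a one-sided comparison of the \emph{outer} boundary fluxes \eqref{1840b}, which only uses $\nabla U_0=(\partial_\nu U_0)\nu$ with $\partial_\nu U_0\leq 0$ on $\partial B_r(0)$ and the monotonicity of $t\mapsto|t|^{n-2}t$. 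This gives the lower bound $\gamma\geq \gamma_0-\int_{B_r(0)}e^{\hat U}/|x|^{2n}$, which suffices to guarantee $(\gamma/(n\omega_n))^{1/(n-1)}>n$ and run the bootstrap; the exact identity $\gamma=\gamma_0$ is only recovered at the very end, from \eqref{1831} with $\Phi=1$ along the sequence of radii where \eqref{gamma1} holds.

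Two secondary points. The refinement $H_0(x)=c\log|x|+h_0(x)$ with $h_0$ bounded near $0$ is not in Serrin's Acta papers; Serrin gives $H_0/\log|x|\to c$ and $|\nabla H_0-cx/|x|^2|=o(1/|x|)$, and the sharper decomposition, plus the distributional identity $\Delta_n H_0 = \gamma\delta_0$, is due to Kichenassamy--Veron \cite{KiVe}, which is what the paper quotes. Finally, in your blow-down step the limit $W_\infty$ is $n$-harmonic on all of $\mathbb{R}^n\setminus\{0\}$ and differs from $c_\ast\log|y|$ by a bounded function; to conclude that the bounded part is a constant you need a Liouville-type statement for $n$-harmonic functions on the whole punctured space, which is exactly Lemma \ref{1804} of the paper. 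Serrin's isolated-singularity theorem on a punctured ball does not give this by itself.
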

\begin{proof} We adopt the same notations as in Theorem \ref{aiuto}, and we try to push more the analysis thanks to \eqref{1710}. Given $r>0$, recall that $\hat U$ has been decomposed in $B_r(0)$ as $\hat U=U_0+H_0$, $U_0,H_0 \in C^1_{\hbox{loc}} (\overline{B_r(0)}\setminus \{0\})$, where $H_0$ is a $n-$harmonic function in $B_r(0) \setminus \{0\}$ with $\displaystyle \sup_{B_r(0) \setminus \{0\}}H_0<+\infty$ and $U_0 \geq 0$ satisfies \eqref{pro1} with
$$U_0=0 ,\qquad \partial_\nu U_0 \leq 0 \hbox{ on }\partial B_r(0).$$

\medskip \noindent The desciption of the behavior of $H_0$ at $0$, as established in \cite{Ser1,Ser2}, has been later improved in \cite{KiVe} to show that there exists $\gamma \geq 0$ with
\begin{equation} \label{sing1}
H_0(x)-(\frac{\gamma}{n\omega_n})^{\frac{1}{n-1}} \ln |x|\in L^\infty (B_r(0)), \qquad \Delta_n H_0=\gamma \delta_0 \hbox{ in }\mathcal{D}'(B_r(0)).
\end{equation}
Since $\hat U \in W^{1,n-1}(B_r(0))$ according to Theorem \ref{aiuto}, we can extend \eqref{E1equiv} at $0$ as
\begin{equation}\label{1732}
-\Delta_n \hat U=\frac{e^{\hat U}}{|x|^{2n}}-\gamma_0 \delta_0
\end{equation}
in the sense
\begin{equation} \label{1734}
\int_{\mathbb{R}^n} |\nabla \hat U|^{n-2} \langle \nabla \hat U,\nabla \Phi \rangle=\int_{\mathbb{R}^n} \frac{e^{\hat U}}{|x|^{2n}} \Phi -\gamma_0 \Phi(0) 
\end{equation} 
for all $\Phi \in C^1(\mathbb{R}^n)$ so that $\hat \Phi \in W^{1,n}_{\hbox{loc}}(\mathbb{R}^n)$. Indeed, let us consider a smooth function $\eta$ so that $\eta=0$ for $|x|\leq \delta$, $\eta=1$ for $|x|\geq 2 \delta$ and $|\nabla \eta|\leq \frac{2}{\delta}$. Use $\eta[\Phi-\Phi(0)] \in \hat H$ as a test function in \eqref{meanE1equiv} to provide
\begin{equation} \label{1805}
\int_{\mathbb{R}^n}\eta |\nabla \hat U|^{n-2}\langle \nabla \hat U,\nabla \Phi \rangle+
O(\int_{\mathbb{R}^n} |\nabla \hat U|^{n-1}|\nabla \eta||\Phi-\Phi(0)|)=
\int_{\mathbb{R}^n} \eta \frac{e^{\hat U}}{|x|^{2n}}(\Phi-\Phi(0)).
\end{equation}
Since
$$\int_{\mathbb{R}^n} |\nabla \hat U|^{n-1}|\nabla \eta||\Phi-\Phi(0)|\leq
C \int_{B_{2\delta}(0)} |\nabla \hat U|^{n-1}\to 0$$
as $\delta \to 0$, we can let $\delta \to 0$ in \eqref{1805} and get the validity of \eqref{1734} in view of $ \gamma_0=\int_{\mathbb{R}^n} \frac{e^{\hat U}}{|x|^{2n}}=\int_{\mathbb{R}^n} e^U$

\medskip \noindent Since $U_0 \geq 0$, the singularity of $\hat U=U_0+H_0$ at $0$ should be weaker than that of $H_0$. Via an approximation procedure, it is easily seen that equations \eqref{sing1}-\eqref{1732} can be re-written as
\begin{eqnarray} 
&& \gamma \Phi(0)=\int_{\partial B_r(0)} |\nabla H_0|^{n-2} \partial_\nu H_0 \Phi-\int_{B_r(0)}  |\nabla H_0|^{n-2}\langle \nabla H_0,\nabla \Phi \rangle \label{1830} \\
&&\gamma_0 \Phi(0)=
\int_{B_r(0)} \frac{e^{\hat U}}{|x|^{2n}} \Phi +
\int_{\partial B_r(0)} |\nabla \hat U|^{n-2} \partial_\nu \hat U \Phi-
\int_{B_r(0)}  |\nabla \hat U|^{n-2}\langle \nabla \hat U,\nabla \Phi \rangle \label{1831}
\end{eqnarray} 
for all $\Phi \in C^1(B_r(0))$. We claim that
\begin{equation} \label{1840b}
|\nabla H_0|^{n-2}\partial_\nu H_0 \geq |\nabla \hat U|^{n-2}\partial_\nu \hat U \qquad \hbox{on }\partial B_r(0)
\end{equation} 
and then, by taking $\Phi=1$ in \eqref{1830}-\eqref{1831}, we deduce that
\begin{equation} \label{1842}
\gamma=\int_{\partial B_r(0)} |\nabla H_0|^{n-2} \partial_\nu H_0
\geq \int_{\partial B_r(0)} |\nabla \hat U|^{n-2} \partial_\nu \hat U 
=\gamma_0 -\int_{B_r(0)} \frac{e^{\hat U}}{|x|^{2n}}.
\end{equation} 
To establish the claim \eqref{1840b}, we write $H_0=\hat U-U_0$ and recall that $\nabla U_0=(\partial_\nu U_0)\nu$ with $\partial_\nu U_0 \leq 0$ on $\partial B_r(0)$. 
Since
$$ |\nabla H_0|^{n-2}=\left[ |\nabla \hat U|^2+ (\partial_\nu U_0)^2-2\partial_\nu \hat U \partial_\nu U_0 \right]^{\frac{n-2}{2}},$$
when $\partial_\nu \hat U\geq 0$ we have that
$$|\nabla H_0|^{n-2} \geq  |\nabla \hat U|^{n-2},\quad \partial_\nu H_0 \geq \partial_\nu \hat U\geq 0$$
and then \eqref{1840b} does hold. When $ \partial_\nu U_0\leq \partial_\nu \hat U<0$ there holds $\partial_\nu H_0  \geq 0$ and then
$$|\nabla H_0|^{n-2} \partial_\nu H_0 \geq  0 > |\nabla \hat U|^{n-2} \partial_\nu \hat U.$$
When $ \partial_\nu \hat U<\partial_\nu U_0$ we have that 
\begin{eqnarray*}
|\nabla H_0|^{n-2} \leq  |\nabla \hat U|^{n-2},\quad 0> \partial_\nu H_0 \geq \partial_\nu \hat U
\end{eqnarray*}
and then \eqref{1840b} does hold.

\medskip \noindent Since $(\frac{\gamma_0}{n\omega_n})^{\frac{1}{n-1}}\geq \frac{n^2}{n-1}$ in view of \eqref{1710}, by \eqref{sing1} and \eqref{1842} we have that
\begin{equation} \label{1951}
\frac{e^{H_0}}{|x|^{2n}} \in L^q(B_r(0))
\end{equation}
for all $1\leq q<\frac{n-1}{n-2}$ if $r$ is sufficiently small. By \eqref{pro1} and \eqref{1951} it follows that
\begin{equation} \label{1952}
\frac{e^{\hat U}}{|x|^{2n}}=e^{U_0} \frac{e^{H_0}}{|x|^{2n}} \in L^q(B_r(0))
\end{equation}
for all $1\leq q<\frac{n-1}{n-2}$ if $r>0$ is sufficiently small. Thanks to \eqref{1952} we can apply Proposition \ref{Serrinest} to $U_\epsilon$ on $A_\epsilon$ (see \eqref{eqHepsilon}-\eqref{eqUepsilon}) with ${\bf a}(x,p)$ given by \eqref{2022} to get
$$\|U_\epsilon \|_{\infty, A_\epsilon} \leq C$$
for some uniform $C>0$.  We have used that 
$$\sup_{\epsilon} \|U_\epsilon\|_{p,A_\epsilon}<+\infty$$ 
for all $p\geq 1$ in view of  \eqref{17255} and the Sobolev embedding Theorem. Letting $\epsilon \to 0$ we get that $\|U_0\|_{\infty,B_r(0)}<+\infty$ and then
\begin{equation} \label{1610}
\hat U=U_0+H_0=(\frac{\gamma}{n\omega_n})^{\frac{1}{n-1}} \ln |x|+H(x), \quad H \in L^\infty_{\hbox{loc}}(\mathbb{R}^n)
\end{equation}
in view of \eqref{sing1}. Notice that now $\gamma$ does not depend on $r$ and then satisfies
$$\gamma \geq c_n \omega_n$$ 
in view of \eqref{1710} and \eqref{1842}. Given $r>0$ small, let us define the function 
$$V_r(y)=\hat U(ry)-(\frac{\gamma}{n\omega_n})^{\frac{1}{n-1}} \ln r=(\frac{\gamma}{n\omega_n})^{\frac{1}{n-1}} \ln |y|+H(ry).$$
Since
$$\Delta_n V_r=- \frac{e^{\hat U(ry)}}{r^n |y|^{2n}}=
- \frac{r^{\frac{n}{n-1}+\alpha}e^{H(ry)}}{|y|^{\frac{n(n-2)}{n-1} -\alpha}}$$
in view of \eqref{1610} with $\alpha= (\frac{\gamma}{n\omega_n})^{\frac{1}{n-1}}-\frac{n^2}{n-1} \geq 0$, we have that $V_r$ and $\Delta_n V_r$ are bounded in $L^\infty_{\hbox{loc}}(\mathbb{R}^n \setminus \{0\})$, uniformly in $r$. By \cite{Dib,Ser1,Tol} we deduce that $V_r$ is bounded in $C^{1.\alpha}_{\hbox{loc}}(\mathbb{R}^n \setminus \{0\})$, uniformly in $r$. By the Ascoli-Arzel\'a's Theorem and a diagonal process we can find a sequence $r \to 0$ so that $V_r \to V_0$ in $C^1_{\hbox{loc}}(\mathbb{R}^n \setminus \{0\})$, where $V_0$ is a n-harmonic function in $\mathbb{R}^n \setminus \{0\}$. Setting $H_r(y)=H(ry)$, we deduce that $H_r \to H_0$ in $C^1_{\hbox{loc}}(\mathbb{R}^n \setminus \{0\})$, where $H_0 \in L^\infty(\mathbb{R}^n)$ in view of \eqref{1610}.
Since $V_0=(\frac{\gamma}{n\omega_n})^{\frac{1}{n-1}} \ln |y|+H_0$ with $H_0 \in L^\infty (\mathbb{R}^n)\cap C^1(\mathbb{R}^n \setminus \{0\})$, we can apply Lemma \ref{1804} below to show that $H_0$ is a constant function. In particular we get that
\begin{equation} \label{1813}
\sup_{|x|=r} |x| \Big|\nabla \left( \hat U(x)-(\frac{\gamma}{n\omega_n})^{\frac{1}{n-1}} \ln |x| \right) \Big| =\sup_{|y|=1} |\nabla H_r(y)| \to \sup_{|y|=1} |\nabla H_0(y)| =0 \end{equation}
along the sequence $r \to 0$. The proof of  \eqref{gamma0}-\eqref{gamma1} now follows by \eqref{1610}-\eqref{1813} once we show that $\gamma=\gamma_0$. Indeed, by \eqref{1831} we have that
$$\gamma_0=
\int_{B_r(0)} \frac{e^{\hat U}}{|x|^{2n}} +
\int_{\partial B_r(0)} |\nabla \hat U|^{n-2} \partial_\nu \hat U =
o(1) + \frac{\gamma}{n\omega_n}  \int_{\partial B_r(0)} \frac{1}{|x|^{n-1}}(1+o(1)) \to \gamma$$
where $r \to 0$ is any sequence with property \eqref{1813}. The proof is complete.
\end{proof}
\noindent We have used the following simple result:
\begin{lm} \label{1804} Let $\gamma \ln |x|+H$ be a $n-$harmonic function in $\mathbb{R}^n \setminus \{0\}$ with $H \in C^1(\mathbb{R}^n \setminus \{0\})$. If $H \in L^\infty(\mathbb{R}^n)$, then $H$ is a constant function.
\end{lm}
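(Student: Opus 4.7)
The strategy is to show that $V:=\gamma\ln|x|+H$ is rotationally invariant, after which a direct ODE analysis forces $H$ to be constant.

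First, I would dispose of the case $\gamma=0$: in this case $V=H$ is a bounded $n$-harmonic function on $\mathbb{R}^n\setminus\{0\}$. By Serrin's removable-singularity theorem \cite{Ser1} it extends to an $n$-harmonic function on $\mathbb{R}^n$, and the Liouville property for bounded $n$-harmonic functions then forces $H\equiv\hbox{const}$. For $\gamma\neq 0$, replacing $V$ by $-V$ (which is still $n$-harmonic since $\Delta_n(-u)=-\Delta_n u$) reduces us WLOG to $\gamma>0$.

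Since $V$ is bounded above in any punctured ball around $0$, the Kichenassamy-V\'eron classification of isolated singularities \cite{KiVe} provides an expansion $V=\gamma'\ln|x|+\hbox{(H\"older)}$ with $\gamma'\geq 0$; boundedness of $H=V-\gamma\ln|x|$ forces $\gamma'=\gamma$, so $H$ extends H\"older-continuously to $0$ with limit $c_0$. Applying the same analysis to $-\hat V(x)=\gamma\ln|x|-H(x/|x|^2)$ near the origin (bounded above there) yields a continuous limit $c_\infty$ of $H$ at infinity. Moreover the rescaling family $V_\lambda(x):=V(\lambda x)-\gamma\ln\lambda=\gamma\ln|x|+H(\lambda x)$ is uniformly bounded, hence precompact in $C^{1,\alpha}_{\hbox{loc}}(\mathbb{R}^n\setminus\{0\})$ by \cite{Dib,Tol}, and the convergence $V_\lambda\to\gamma\ln|x|+c_0$ as $\lambda\to 0^+$ (and the analogous limit at $\infty$) implies $|x||\nabla H(x)|\to 0$ as $x\to 0$ and as $|x|\to+\infty$. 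In particular the critical set $\{\nabla V=0\}$ is confined to a compact annulus.

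Next I would exploit rotation invariance of $\Delta_n$. For every $R\in SO(n)$, the function $V\circ R=\gamma\ln|x|+H\circ R$ is again $n$-harmonic. Setting $\phi:=V-V\circ R=H-H\circ R$ and using
\begin{equation*}
|\nabla V|^{n-2}\nabla V-|\nabla(V\circ R)|^{n-2}\nabla(V\circ R)=-A(x)\nabla\phi,
\end{equation*}
with
\begin{equation*}
A(x):=\int_0^1\Big((n-2)|\xi_t|^{n-4}\xi_t\otimes\xi_t+|\xi_t|^{n-2}I\Big)\,dt,\qquad \xi_t:=(1-t)\nabla(V\circ R)+t\nabla V,
\end{equation*}
one has $\hbox{div}(A(x)\nabla\phi)=0$ in $\mathbb{R}^n\setminus\{0\}$, where $A$ is symmetric and uniformly positive definite outside the critical sets of $V$ and $V\circ R$. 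The function $\phi$ is bounded, continuous on $\mathbb{R}^n\setminus\{0\}$, and satisfies $\phi(x)\to c_0-c_0=0$ as $x\to 0$ and $\phi(x)\to c_\infty-c_\infty=0$ as $|x|\to\infty$.

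If $\sup\phi>0$, the boundary limits force the supremum to be attained at some interior point $x^*\in\mathbb{R}^n\setminus\{0\}$; the strong maximum principle for $\hbox{div}(A\nabla\,\cdot\,)$ on the complement of the (compact) critical set, together with continuity of $\phi$ across that set and the connectedness of $\mathbb{R}^n\setminus\{0\}$ for $n\geq 2$, propagates $\phi\equiv\phi(x^*)>0$ throughout $\mathbb{R}^n\setminus\{0\}$, contradicting $\phi\to 0$ at the ends. The symmetric bound on $\inf\phi$ then gives $\phi\equiv 0$, hence $V\circ R=V$ for every $R\in SO(n)$, i.e., $V$ is radial. A radial $n$-harmonic function $V(x)=f(|x|)$ satisfies $(r^{n-1}|f'|^{n-2}f')'=0$, so $f(r)=a+b\ln r$; matching with $\gamma\ln|x|+H$ forces $b=\gamma$ and $H\equiv a$.

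The delicate step I expect is the strong-maximum-principle propagation, since $A$ degenerates on the critical set of $V$: one needs precisely the a priori confinement of that set to a bounded annulus (provided by the blow-up analysis above) together with the connectedness of $\mathbb{R}^n\setminus\{0\}$, in order to carry the constancy of $\phi$ from a neighborhood of $x^*$ out to the two ends where $\phi$ vanishes.
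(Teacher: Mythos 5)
Your argument takes a genuinely different and considerably heavier route than the paper, and as written it has a gap at exactly the step you flag as ``delicate.'' For $n>2$ the matrix $A(x)$ you introduce is only nonnegative definite and vanishes where $\nabla V$ and $\nabla(V\circ R)$ both vanish, and the strong maximum principle for $\mathrm{div}(A\nabla\phi)=0$ is \emph{not} automatic across such a degeneration set: confining that set to a compact annulus does not by itself give ellipticity there, and ``continuity of $\phi$ plus connectedness'' is not a substitute for Hopf/SMP where the operator degenerates. In fact a strong comparison principle for differences of $p$-harmonic functions through common critical points is a well-known delicate issue, and your proof does not supply it.

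The gap is, however, repairable without the linearization: since $V$ and $V\circ R$ are both $n$-harmonic, the \emph{weak} comparison principle applies directly. On an annulus $A_{\rho,R}=B_R\setminus \overline{B_\rho}$ one compares $V$ with the $n$-harmonic function $V\circ R+\max_{\partial A_{\rho,R}}\phi$ to obtain $\sup_{A_{\rho,R}}\phi\le\max_{\partial A_{\rho,R}}\phi$; your Kichenassamy--V\'eron analysis gives $\phi\to 0$ on $\partial B_\rho$ and $\partial B_R$ as $\rho\to 0$, $R\to\infty$, so $\sup\phi\le 0$, symmetrically $\inf\phi\ge 0$, hence $\phi\equiv 0$ and $V$ is radial. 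This avoids the SMP entirely and would make your scheme correct, though it still requires the full isolated-singularity classification at both ends, a blow-up argument, and the radial ODE.

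By contrast, the paper's proof is a short, self-contained energy estimate: test the identity $\Delta_n(\gamma\ln|x|+H)-\Delta_n(\gamma\ln|x|)=0$ with $\eta^n H$, use the monotonicity constant $d$ from \eqref{1045} and the boundedness of $H$ to absorb terms via Young's inequality, and choose the standard logarithmic cut-off $\eta$ on $\{\delta^2\le|x|\le 1/\delta^2\}$. Both right-hand integrals $\int|\nabla\eta|^n$ and $\int|\nabla\eta|^{n/(n-1)}|x|^{-n(n-2)/(n-1)}$ decay like negative powers of $|\ln\delta|$, yielding $\int|\nabla H|^n=0$ directly. This uses neither Kichenassamy--V\'eron, nor removable singularities, nor Liouville for $n$-harmonic functions, nor any maximum principle, and it handles all $\gamma$ and all $n\ge 2$ uniformly. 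In short, your approach is salvageable via weak comparison but is strictly more machinery than the Caccioppoli argument the paper uses, and the SMP step as you wrote it is the concrete flaw.
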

\begin{proof}
Let $\eta$ be a cut-off function with compact support in $\mathbb{R}^n \setminus \{0\}$. Since
$$-\Delta_n (\gamma \ln |x|+H)=-\Delta_n(\gamma \ln |x|+H)+\Delta_n(\gamma \ln |x|)=0 \qquad \hbox{in }\mathbb{R}^n \setminus \{0\},$$
we can use $\eta^n H$ as a test function to get
\begin{eqnarray*}
d \int_{\mathbb{R}^n} \eta ^n |\nabla H|^n &\leq& \int_{\mathbb{R}^n} \eta^n \langle |\nabla (\gamma \ln |x|+H)|^{n-2} \nabla (\gamma \ln |x|+H)-|\nabla (\gamma \ln |x|)|^{n-2}\nabla (\gamma \ln |x|), \nabla H \rangle \\
&=& - n \int_{\mathbb{R}^n } \eta^{n-1}  H \langle |\nabla (\gamma \ln |x|+H)|^{n-2} \nabla (\gamma \ln |x|+H)-|\nabla (\gamma \ln |x|)|^{n-2}\nabla (\gamma \ln |x|), \nabla \eta \rangle
\end{eqnarray*}
in view of \eqref{1045}. Since $H \in L^\infty (\mathbb{R}^n)$, by the Young's inequality we get that
\begin{eqnarray*}
d \int_{\mathbb{R}^n} \eta ^n |\nabla H|^n \leq C n \|H\|_\infty  \int_{\mathbb{R}^n } \eta^{n-1}  \left[|\nabla H|^{n-1}+\frac{|\nabla H|}{|x|^{n-2}}\right]|\nabla \eta|
\leq \frac{d}{2}  \int_{\mathbb{R}^n} \eta ^n |\nabla H|^n+C \left[\int_{\mathbb{R}^n} |\nabla \eta|^n+
\int_{\mathbb{R}^n} \frac{|\nabla \eta|^{\frac{n}{n-1}}}{|x|^{\frac{n(n-2)}{n-1}}} \right]
\end{eqnarray*}
in view of $\eta \leq 1$ and
$$ ||v+w|^{n-2}(v+w)-|w|^{n-2}w|\leq C (|v|^{n-1}+|v||w|^{n-2}).$$
Hence, we have found that
\begin{eqnarray} \label{16077}
\int_{\mathbb{R}^n} \eta ^n |\nabla H|^n \leq C \left[\int_{\mathbb{R}^n} |\nabla \eta|^n+
\int_{\mathbb{R}^n} \frac{|\nabla \eta|^{\frac{n}{n-1}}}{|x|^{\frac{n(n-2)}{n-1}}} \right].
\end{eqnarray}
Given $\delta \in (0,1)$, we make the following choice for $\eta$:
$$\eta(x)=\left\{ \begin{array}{ll}0  &\hbox{if }|x|\leq \delta^2\\
-\frac{\ln |x|-2 \ln\delta}{\ln \delta}  & \hbox{if }\delta^2 \leq |x|\leq \delta\\
1& \hbox{if }\delta \leq |x|\leq \frac{1}{\delta}\\
\frac{\ln |x|+2\ln \delta}{\ln \delta} &\hbox{if }\frac{1}{\delta}\leq |x|\leq \frac{1}{\delta^2}\\
0& \hbox{if }|x|\geq \frac{1}{\delta^2}.
\end{array} \right.$$
Since
$$\int_{\mathbb{R}^n} |\nabla \eta|^n =
\frac{2}{|\ln \delta|^n}\int_{\{\delta^2 \leq |x| \leq \delta\}} \frac{1}{|x|^n}=
\frac{2 \omega_{n-1}}{|\ln \delta|^{n-1}}
\to 0$$
 and
$$  \int_{\mathbb{R}^n} \frac{|\nabla \eta|^{\frac{n}{n-1}}}{|x|^{\frac{n(n-2)}{n-1}}}=
\frac{2}{|\ln \delta|^{\frac{n}{n-1}}}\int_{\{\delta^2 \leq |x| \leq \delta\}} \frac{1}{|x|^n}
=\frac{2 \omega_{n-1}}{|\ln \delta|^{\frac{1}{n-1}}}\to 0$$
as $\delta \to 0$, we deduce that
$$ \int_{\mathbb{R}^n} |\nabla H|^n=0$$ 
by letting $\delta \to 0$ in \eqref{16077}. Then $H$ is a constant function.
\end{proof}

\section{Pohozaev identity}
\noindent Thanks to Theorem \ref{aiutobis}, we aim to apply the Pohozaev identity of Lemma \ref{Po} to show that \eqref{1710} automatically implies $\int_{\mathbb{R}^n} e^U =c_n\omega_n$. Combined with Theorem \ref{iso0944}, it completes the proof of the classification result in Theorem \ref{thm1}.

\medskip \noindent To this aim, we show the following:
\begin{thm}
Let $U$ be a solution of \eqref{E1} which satisfies \eqref{1710}. Then, there holds
$$\int_{\mathbb{R}^n} e^U = c_n \omega_n.$$
\end{thm}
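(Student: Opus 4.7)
The plan is to apply the Pohozaev identity (Lemma \ref{Po}) on increasing balls $B_R(0)$ and pass to the limit $R\to+\infty$ along the sequence provided by Theorem \ref{aiutobis}, reading off $\gamma_0:=\int_{\mathbb{R}^n}e^U$ from the limiting boundary contributions. The first step is to transport Theorem \ref{aiutobis} from $\hat U$ back to $U$ via $U(y)=\hat U(y/|y|^2)$: setting $\beta:=(\gamma_0/(n\omega_n))^{1/(n-1)}$ and using the conformal character of $|x|\cdot|\nabla(\cdot)(x)|$ under inversion, \eqref{gamma0}--\eqref{gamma1} translate into
\begin{equation*}
U(y)=-\beta\ln|y|+O(1)\quad\text{as }|y|\to+\infty,\qquad \sup_{|y|=R_k}|y|\,\bigl|\nabla U(y)+\beta\tfrac{y}{|y|^2}\bigr|\to 0
\end{equation*}
along a sequence $R_k\to+\infty$. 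On $\partial B_{R_k}(0)$ this yields the \emph{uniform} expansions $|\nabla U|=\beta/R_k+o(1/R_k)$, $\partial_\nu U=-\beta/R_k+o(1/R_k)$, together with the pointwise bound $e^U\leq CR_k^{-\beta}$.

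Next, I would apply Lemma \ref{Po} on $B_R(0)$ with $y=0$, $f(s)=e^s$, $F(s)=e^s-1$. The two $-n\omega_n R^n$ contributions produced by the $-1$ on the two sides of the identity cancel, and using $\langle x,\nu\rangle=R$ and $\langle x,\nabla U\rangle=R\,\partial_\nu U$ on $\partial B_R(0)$ the identity reduces to
\begin{equation*}
n\int_{B_R(0)}e^U=R\int_{\partial B_R(0)}e^U+R\int_{\partial B_R(0)}|\nabla U|^{n-2}(\partial_\nu U)^2-\frac{R}{n}\int_{\partial B_R(0)}|\nabla U|^n.
\end{equation*}
Assumption \eqref{1710} forces $\beta\geq(c_n/n)^{1/(n-1)}=n^2/(n-1)>n$, so $R_k\int_{\partial B_{R_k}}e^U=O(R_k^{n-\beta})\to 0$; the remaining two boundary integrals at $R=R_k$, via the uniform expansions together with $|\partial B_R|=n\omega_n R^{n-1}$, converge respectively to $n\omega_n\beta^n$ and to $\omega_n\beta^n$. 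In the limit I thus obtain
\begin{equation*}
n\gamma_0=(n-1)\omega_n\beta^n=(n-1)\omega_n\bigl(\gamma_0/(n\omega_n)\bigr)^{n/(n-1)}.
\end{equation*}

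Setting $u:=\gamma_0/(n\omega_n)$, this relation becomes $n^2u=(n-1)u^{n/(n-1)}$, whose unique positive solution is $u^{1/(n-1)}=n^2/(n-1)$, i.e.\ $u=(n^2/(n-1))^{n-1}$, giving $\gamma_0=n\omega_n(n^2/(n-1))^{n-1}=c_n\omega_n$ as desired. The main delicate point of the plan is the first step: it is only the \emph{uniform}-on-the-sphere gradient expansion supplied by \eqref{gamma1} --- not merely the pointwise logarithmic behavior at infinity --- that allows one to read off the sharp coefficient $\beta^n$ in the two gradient boundary integrals; once this uniformity is secured, the Pohozaev computation and the final algebraic identification of $\gamma_0$ are essentially mechanical.
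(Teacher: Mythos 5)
Your proposal is correct and follows essentially the same route as the paper: transport the logarithmic gradient asymptotics of Theorem \ref{aiutobis} back to $U$ via the inversion, apply the Pohozaev identity of Lemma \ref{Po} on $B_R(0)$ with $y=0$, and pass to the limit along the good sequence $R_k\to+\infty$ to obtain $n\gamma_0=(n-1)\omega_n\bigl(\gamma_0/(n\omega_n)\bigr)^{n/(n-1)}$ and hence $\gamma_0=c_n\omega_n$. The only (welcome) extra care you take is making explicit the cancellation of the $-n\omega_nR^n$ contributions coming from $F(t)=e^t-1$, a detail the paper leaves implicit.
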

\begin{proof} Since
$$\partial_i U(x)= \sum_{k=1}^n  \frac{1}{|x|^2} \left(\delta_{ik}-2\frac{x_i x_k}{|x|^2}\right) (\partial_k \hat U)(\frac{x}{|x|^2}),$$
we have that
$$|\nabla U|(x)=\frac{1}{|x|^2} |\nabla \hat U| (\frac{x}{|x|^2}),\qquad
\langle x,\nabla U(x)\rangle=-\langle \frac{x}{|x|^2},\nabla \hat U(\frac{x}{|x|^2})\rangle.$$
We can apply Theorem \ref{aiutobis} and deduce by \eqref{gamma1} that
\begin{equation} \label{1931}
 |\nabla U|(x)=\frac{1}{|x|} [(\frac{\gamma_0}{n\omega_n})^{\frac{1}{n-1}} + o(1)],\qquad
\langle x,\nabla U(x) \rangle=-(\frac{\gamma_0}{n\omega_n})^{\frac{1}{n-1}}+o(1)
\end{equation}
uniformly for $x \in \partial B_R(0)$, for a sequence $R =\frac{1}{r} \to +\infty$ and $\gamma_0=\int_{\mathbb{R}^n} e^U$. By \eqref{1931} we have that
\begin{equation} \label{2115}
\int_{\partial B_R(0)} \left[|\nabla U|^{n-2}\langle x,\nabla U \rangle \partial_{\nu}U-\frac{|\nabla U|^n}{n}\langle x,\nu\rangle\right]\to 
\omega_{n-1}(1-\frac{1}{n})(\frac{\gamma_0}{n\omega_n})^{\frac{n}{n-1}} 
\end{equation}
as $R\to +\infty$. Since by \eqref{gamma0} 
$$|x|^{(\frac{\gamma_0}{n\omega_n})^{\frac{1}{n-1}}}e^U \in L^\infty (\mathbb{R}^n \setminus B_1(0) )$$
with $(\frac{\gamma_0}{n\omega_n})^{\frac{1}{n-1}}\geq \frac{n^2}{n-1}$ in view of \eqref{1710}, we also get that
\begin{equation} \label{2123}
\int_{\partial B_R(0)} e^U  \langle x,\nu\rangle \to 0 \end{equation}
as $R\to +\infty$. We apply Lemma \ref{Po} to $U$ on $B_R(0)$ with $y=0$
%$f(t)=e^t$ 
and let $R\to +\infty$ to get
$$n \gamma_0=\omega_n(n-1)(\frac{\gamma_0}{n\omega_n})^{\frac{n}{n-1}} $$
in view of \eqref{2115}-\eqref{2123}. It results that
$$\gamma_0=\int_{\mathbb{R}^n} e^U=c_n \omega_n.$$
\end{proof}

\bibliographystyle{plain}

\end{document}